  \let\@wraptoccontribs\wraptoccontribs
  \renewcommand{\epsilon}{\varepsilon}
  \renewcommand{\setminus}{\smallsetminus}
  \renewcommand{\emptyset}{\varnothing}
  \numberwithin{equation}{section}
  \newtheorem{theorem}[equation]{Theorem}
  \newtheorem{observation}[equation]{Observation}
  \newtheorem{proposition}[equation]{Proposition}
  \newtheorem{corollary}[equation]{Corollary}
  \newtheorem{lemma}[equation]{Lemma}
  \theoremstyle{definition}
  \newtheorem{definition}[equation]{Definition}
  \newtheorem{remark}[equation]{Remark}
  \newcommand{\Link}{\lk}
  \DeclareMathOperator{\join}{\ast}
  \newcommand{\cohom}[3]{H^{{\raise1pt\hbox{$\scriptstyle#1$}}}(#2\>\!,#3)}
  \newcommand{\tatecohom}[3]%
  {\widehat H^{{\raise1pt\hbox{$\scriptstyle#1$}}}(#2\>\!,#3)}
  \newcommand{\Cohom}[3]%
  {H^{{\raise1pt\hbox{$\scriptstyle#1$}}}\big(#2\>\!,#3\big)}
  \newcommand{\Tatecohom}[3]%
  {\widehat H^{{\raise1pt\hbox{$\scriptstyle#1$}}}\big(#2\>\!,#3\big)}
  \newcommand{\homol}[3]{H_{{\lower1pt\hbox{$\scriptstyle#1$}}}(#2\>\!,#3)}
  \newcommand{\homolog}[2]{H_{{\lower1pt\hbox{$\scriptstyle#1$}}}(#2)}
  \newcommand{\da}{\downarrow}
  \newcommand{\Map}{\operatorname{Map}}
  \newcommand{\frakF}{\mathfrak{F}}
  \newcommand{\frakG}{\mathfrak{G}}
  \newcommand{\calO}{\mathcal O}
  \newcommand{\B}{\mathcal B}
  \newcommand{\ModOFG}{\mathop{{\operator@font
        Mod\text{-}}\calO_{\frakF}G}}
  \newcommand{\OFGMod}{\mathop{\calO_{\frakF}G\text{-}{\operator@font
        Mod}}}
  \newcommand{\ModOGG}{\mathop{{\operator@font
        Mod\text{-}}\calO_{\frakG}G}}
  \newcommand{\OGGMod}{\mathop{\calO_{\frakG}G\text{-}{\operator@font
        Mod}}}
  \DeclareMathOperator{\PMap}{PMap}
  \DeclareMathOperator{\lk}{lk}
  \newcommand{\base}{\rm{base}}
  \newcommand{\Sym}{\operatorname{Sym}}
  \newcommand{\Fall}{\frakF_{\operator@font all}}
  \newcommand{\Ffin}{\frakF_{\operator@font fin}}
  \newcommand{\Fvc}{\frakF_{\operator@font vc}}
  \newcommand{\Fic}{\frakF_{\operator@font ic}}
  \newcommand{\Ffg}{\frakF_{\operator@font fg}}
  \newcommand{\Fpc}{\frakF_{\operator@font pc}}
  \newcommand{\Fab}{\frakF_{\operator@font ab}}
  \newcommand{\Fvpc}{\frakF_{\operator@font vpc}}
  \newcommand{\Fvab}{\frakF_{\operator@font vab}}
  \DeclareMathOperator{\id}{id}
  \renewcommand{\coprod}%
  {\mathop{\rotatebox[origin=c]{180}{$\displaystyle\prod$}}\limits}
  \numberwithin{equation}{section}
  \newcommand{\notion}[1]{\emph{#1}}
  \providecommand{\IfEmptyThenElse}[1]{%
    \begingroup
    \def\tempa{#1}%
    \def\tempb{}%
    \ifx\tempa\tempb
      \def\next{\@firstoftwo}%
    \else
      \def\next{\@secondoftwo}%
    \fi
    \expandafter\endgroup\next
  }
  \newcommand{\IfNonEmpty}[2]{\IfEmptyThenElse{#1}{}{#2}}
  \newlength{\vshrinkamount}
  \newcommand{\vshrink}[2][0pt]{%
    \begingroup
    \setlength\fboxrule{0pt}%
    \setlength\fboxsep{-\vshrinkamount}%
    \framebox[\width]{$#2$}%
    \endgroup
  }
  \newcommand{\OldParentheses}[1]{\left(\vshrink{#1}\right)}
  \newcommand{\ParenthesesDouble}[1]{\left(\left(\vshrink{#1}\right)\right)}
  \newcommand{\Brakets}[1]{\left[\vshrink{#1}\right]}
  \newcommand{\BraketsDouble}[1]{\left[\left[\vshrink{#1}\right]\right]}
  \newcommand{\EvalKerning}{\mathchoice{%
      \negthinspace
    }{%
      \negthinspace
    }{%
    }{%
    }}
  \newcommand{\EvalAt}{\EvalKerning\OldParentheses}
  \newcommand{\EvalAtAt}{\EvalKerning\ParenthesesDouble}
  \newcommand{\EvalAd}{\EvalKerning\Brakets}
  \newcommand{\EvalAdAd}{\EvalKerning\BraketsDouble}
  \newcommand{\OptExponent}[1][]{\IfNonEmpty{#1}{^{#1}}}
  \newcommand{\OptParExponent}[1][]{\IfNonEmpty{#1}{^{\left(#1\right)}}}
  \newcommand{\OptIndex}[1][]{\IfNonEmpty{#1}{_{#1}}}
  \newcommand{\OptArg}[2]{#1\IfNonEmpty{#2}{\EvalAt{#2}}}
  \newcommand{\OptIndexOptPar}[1][]{\IfNonEmpty{#1}{_{#1}}\OptParentheses}
  \newcommand{\OptParentheses}[1][]{\IfNonEmpty{#1}{\EvalKerning\left(#1\right)}}
  \newcommand{\OptExponentParentheses}[1][]{\IfNonEmpty{#1}{^{#1}}\EvalAt}
  \newcommand{\OptExponentParenthesesDouble}[1][]{\IfNonEmpty{#1}{^{#1}}\EvalAtAt}
  \newcommand{\OptExponentBrakets}[1][]{\IfNonEmpty{#1}{^{#1}}\EvalAd}
  \newcommand{\OptExponentBraketsDouble}[1][]{\IfNonEmpty{#1}{^{#1}}\EvalAdAd}
  \newcommand{\OptIndexParentheses}[1][]{\IfNonEmpty{#1}{_{#1}}\OptExponentParentheses}
  \newcommand{\OptIndexParenthesesDouble}[1][]{\IfNonEmpty{#1}{_{#1}}\OptExponentParenthesesDouble}
  \newcommand{\OptIndexBrakets}[1][]{\IfNonEmpty{#1}{_{#1}}\OptExponentBrakets}
  \newcommand{\OptIndexBraketsDouble}[1][]{\IfNonEmpty{#1}{_{#1}}\OptExponentBraketsDouble}
  \newcommand{\OptIndexParExponent}[1][]{\IfNonEmpty{#1}{_{#1}}\OptParExponent}
  \newcommand{\OptIndexExponent}[1][]{\IfNonEmpty{#1}{_{#1}}\OptExponent}
  \newcommand{\DoubleArg}[2]{\EvalAt{#1,#2}}
  \newcommand{\OptExponentDoubleArg}[1][]{\IfNonEmpty{#1}{_{#1}}\DoubleArg}
  \newcommand{\OptIndexDoubleArg}[1][]{\IfNonEmpty{#1}{_{#1}}\OptExponentDoubleArg}
  \newcommand{\UnVar}[1]{%
    \let#1\undefined
    \expandafter\let\csname\expandafter\@gobble\string#1Of\endcsname\undefined
    \expandafter\let\csname\expandafter\@gobble\string#1OfOf\endcsname\undefined
    \expandafter\let\csname\expandafter\@gobble\string#1Ad\endcsname\undefined
    \expandafter\let\csname\expandafter\@gobble\string#1AdAd\endcsname\undefined
  }
  \newcommand{\newvariable}[2]{%
    \newcommand{#1}[1][]{#2\IfNonEmpty{##1}{_{##1}}\OptExponent}
    \expandafter\newcommand\csname\expandafter\@gobble\string#1Of\endcsname{#1\OptIndexParentheses}
    \expandafter\newcommand\csname\expandafter\@gobble\string#1OfOf\endcsname{#1\OptIndexParenthesesDouble}
    \expandafter\newcommand\csname\expandafter\@gobble\string#1Ad\endcsname{#1\OptIndexBrakets}
    \expandafter\newcommand\csname\expandafter\@gobble\string#1AdAd\endcsname{#1\OptIndexBraketsDouble}
  }
  \newcommand{\renewvariable}[2]{%
    \renewcommand{#1}[1][]{#2\IfNonEmpty{##1}{_{##1}}\OptExponent}
    \expandafter\renewcommand\csname\expandafter\@gobble\string#1Of\endcsname{#1\OptIndexParentheses}
    \expandafter\renewcommand\csname\expandafter\@gobble\string#1Ad\endcsname{#1\OptIndexBrakets}
  }
  \newcommand{\getlastargs}[2][]{%
    \def\tempb{#1}%
    \def\empty{}%
    \ifx\EndSymbol\empty
      \ifx\tempb\empty
        \ifx\tempa\empty
          \TheSymbol #2%
        \else
          \TheSymbol_{\tempa}#2%
        \fi
      \else
        \TheSymbol_{\tempa}^{#1}#2%
      \fi
    \else
      \ifx\tempb\empty
        \ifx\tempa\empty
          \TheSymbol #2\EndSymbol
        \else
          \TheSymbol #2\EndSymbol_{\tempa}%
        \fi
      \else
        \TheSymbol #2\EndSymbol_{\tempa}^{#1}%
      \fi
    \fi
    \endgroup
  }
  \newcommand{\mapcolon}{\colon}
  \newcommand{\MaxOf}[2][]{%
    \IfEmptyThenElse{#1}{
      \max\left(\vshrink{#2}\right)
    }{
      \max_{#1}\left(\vshrink{#2}\right)
    }
  }
  \newcommand{\MinOf}[2][]{%
    \IfEmptyThenElse{#1}{
      \min\left(\vshrink{#2}\right)
    }{
      \min_{#1}\left(\vshrink{#2}\right)
    }
  }
  \newvariable{\TheGraph}{\Gamma}
  \newvariable{\TheSurface}{Z}
  \newvariable{\NumBlue}{m}
  \newvariable{\NumRed}{n}
  \newvariable{\NumBlack}{r}
  \newvariable{\ArcCx}{\mathcal{C}}
  \newvariable{\Connectivity}{\nu}
  \newvariable{\TheArc}{\alpha}
  \newvariable{\AltArc}{\beta}
  \newvariable{\TheDim}{d}
  \newvariable{\CutSurface}{{\TheSurface}'}
  \newvariable{\TheConnectivity}{l}
  \newvariable{\AltSurface}{\TheSurface^*}
  \newvariable{\AltBlue}{\NumBlue^*}
  \newvariable{\AltRed}{\NumRed^*}
  \newvariable{\AltBlack}{\NumBlack^*}
  \newvariable{\TheSpace}{K}
  \newvariable{\AltSpace}{L}
  \newvariable{\IntSpace}{M}
  \newvariable{\ThePoint}{P}
  \newvariable{\BluePoint}{P}
  \newvariable{\RedPoint}{Q}
  \newvariable{\Homotopy}{\pi}
  \newvariable{\TheMap}{\varphi}
  \newvariable{\AltMap}{\psi}
  \newvariable{\TheSphere}{E}
  \newvariable{\SimplexDim}{k}
  \newvariable{\TheVertex}{v}
  \newvariable{\AltVertex}{w}
  \newvariable{\IntersectionSet}{I}
  \newvariable{\NumCrossings}{\#}
  \newvariable{\ClosestPoint}{N}
  \newvariable{\TheLinkSphere}{L}
  \newvariable{\CommonLink}{K}
  \let\Link\undefined
  \newvariable{\Link}{\operatorname{lk}}
  \newcommand{\CardOf}[1]{|#1|}
  \newvariable{\TheJoinCx}{Y}
  \newvariable{\TheBaseCx}{X}
  \newvariable{\TheJoinPr}{\pi}
  \newvariable{\HomotopyGroup}{\pi}
  \newcommand{\DimOf}[1]{\operatorname{dim}(#1)}
  \newvariable{\TheSimplicialMap}{f}
  \newvariable{\TheSimplicialCx}{X}
  \newvariable{\AltSimplicialCx}{Y}
  \newvariable{\AltDim}{n}
  \newvariable{\TheSimplex}{\sigma}
  \newvariable{\AltSimplex}{\tau}
  \newvariable{\TheGroup}{G}
  \newvariable{\TheGcx}{X}
  \newvariable{\TheCrVal}{s}
  \newcommand{\DescLinkOf}[1]{\lk^{\da}(#1)}
  \newvariable{\PieceCx}{\mathcal{P}}
  \newvariable{\CmpSurface}{Z}
  \newvariable{\StillAdmissible}{Q'}
  \newvariable{\Genus}{g}
  \newvariable{\TheHandle}{T}
  \newvariable{\TheTether}{\alpha}
  \newvariable{\TheBdCmp}{b}
  \newcommand{\BoundaryOf}[1]{\partial #1}
  \newvariable{\ForgetTethers}{\phi}
  \newvariable{\Surface}{S}
  \newvariable{\BdCmps}{Q}
  \newvariable{\THcx}{\mathcal{TH}}
  \newvariable{\THinj}{\mathcal{TH}_1}
  \newvariable{\Hcx}{\mathcal{H}}
  \newvariable{\TargetConn}{k}
  \newvariable{\TheGenus}{g}
  \newvariable{\NumBdComp}{n}
  \newvariable{\OurGroup}{\mathcal{B}}
  \newvariable{\NumStrands}{n}
  \newvariable{\TheType}{d}
  \newcommand{\FType}[1][]{\ensuremath{\text{F}_{#1}}}
  \newcommand{\FPType}[1][]{\ensuremath{\text{FP}_{#1}}}
  \newvariable{\TheCubeCx}{\mathfrak{X}}
  \newvariable{\TheMorseFct}{h}
  \newvariable{\NatNumbers}{\mathbb{N}}
  \newvariable{\RealNumbers}{\mathbb{R}}
  \newcommand{\RottenCoreOf}[1]{\bar{#1}}
  \newcommand{\faceof}{\leq}
  \newcommand{\GoodScxOf}[1]{#1^{\mathrm{good}}}
  \newvariable{\BadSimplex}{\sigma}
  \newvariable{\GoodLink}{\operatorname{lk}^{\mathrm{good}}}
  \newcommand{\monorightarrow}{\hookrightarrow}
  \newvariable{\Homology}{H}
\begin{document}


  \title[Surface Houghton groups]{Surface Houghton groups}


  \author[Aramayona]{Javier Aramayona}
  \address{Instituto de Ciencias Matem\'aticas (ICMAT). Nicol\'as Cabrera, 13--15. 28049, Madrid, Spain}
  \thanks{J.\,A. was supported by grant PID2021-126254NB-I00 and by the Severo Ochoa award CEX2019-000904-S, funded byr MCIN/AEI/10.13039/501100011033.}

  \author[Bux]{Kai-Uwe Bux}
  \address{Fakultät für Mathematik.
    Universität Bielefeld.
    Postfach 100131.
    Universitätsstraße 25.
    D-33501 Bielefeld, Germany}
  \thanks{K.-U.\,B. is supported by the Deutsche Forschungsgemeinschaft (DFG, German Research Foundation) via the grant SFB-TRR 358/1 2023 — 491392403}

  \author[Kim]{Heejoung Kim}
  \address{Department of Mathematics. Ohio State University. 231 West 18th Avenue, Columbus, OH, USA}
  \thanks{}

  \author[Leininger]{Christopher J. Leininger}
  \address{Rice University
    Math Department -- MS 136.
    P.O. Box 1892.
    Houston, TX 77005-1892, USA.}
  \thanks{C.L. was partially supported by NSF grant DMS-2106419.}

  \date{\today}

  \keywords{}

  \begin{abstract} 
For every $n\ge 2$, the {\em surface Houghton group} $\mathcal B_n$ is defined as the asymptotically rigid mapping class group  of a surface with exactly $n$ ends, all of
them non-planar. The groups $\mathcal B_n$ are analogous to, and in fact contain, the braided Houghton groups. These groups also arise naturally in topology: every monodromy homeomorphisms of a fibered component of a depth-1 foliation of closed 3-manifold is conjugate into some $\mathcal B_n$.
As countable mapping class groups of infinite type surfaces, the groups $\B_n$ lie somewhere between classical mapping class groups and big mapping class groups.  
We initiate the study of surface Houghton groups proving, among other things, that $\B_n$ is of type $\FType[n-1]$, but not of type $\FPType[n]$, analogous to the braided Houghton groups.  

  \end{abstract}

  \maketitle

  \section{Introduction and results} 
  For $n\ge 2$, denote by $\Sigma_n$ the connected orientable surface
  with exactly $n$ ends, all of them non-planar. We view $\Sigma_n$ as
  constructed by first gluing a torus with two boundary components
  (called a {\em piece}) to every connected component of a sphere with
  $n$ boundary components, and then inductively gluing a piece to every boundary component of the
  surface from the previous step.
 The {\em surface Houghton group}, $\mathcal B_n$, is the subgroup of the mapping class group $\Map(\Sigma_n)$
  whose elements {\em eventually} send pieces to 
  pieces, in a {\em trivial} manner; see Section \ref{sec:prelim} for a
  precise definition.

Viewed in this light, the groups $\mathcal B_n$ are natural analogs of the asymptotic mapping class groups of Cantor surfaces considered in \cite{FK04,FK09,AF17,GLU20,ABF+21}. In addition, these groups are closely related to Houghton groups and their {\em braided} relatives \cite{Deg00}. In fact, using Funar's description  of the braided Houghton group ${\rm br}H_n$ as an asymptotically rigid mapping class group \cite{Funar07}, in Remark \ref{rmk:subgroup} below we briefly explain how to see that  ${\rm br}H_n$ may be realized as a subgroup of $\mathcal B_n$.

Beyond this analogy, the groups $\B_n$ arise naturally in the study of depth-1 foliations of closed $3$--manifolds. 
More precisely, the non-product components are mapping tori of {\em end-periodic homeomorphisms} (see \cite{Fen97}), and every such homeomorphism is conjugate into some $\B_n$ (see \cite[Corollary~2.9]{FKLL21}).

  \subsection{Finiteness properties}
  Recall that a group is of type $\FType[\TheType]$ if it has a classifying
  space with finite $\TheType$-skeleton, and that it is of type $\FPType[\TheType]$ 
  if the integers, regarded as a trivial module over the group, have a projective resolution that is of finite type in dimensions up to $\TheType$. Such a resolution
  can be obtained for a group of type $\FType[\TheType]$ by using the cellular chain complex of the universal cover of a classifying space. Thus, \FType[\TheType]
  implies \FPType[\TheType].

In \cite{GLU20}, Genevois--Lonjou--Urech proved that the braided Houghton group ${\rm br} H_n$ is of type \FType[n-1] but not of type \FPType[n].  
  Our first result is the analog of this result in our setting.
  
  \begin{theorem}\label{thm:finprop}
    $\OurGroup[\NumStrands]$ is of type $\FType[\NumStrands-1]$ but not of type $\FPType[\NumStrands]$.
  \end{theorem}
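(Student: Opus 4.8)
The plan is to establish both halves of the statement at once through Brown's criterion, applied to the action of $\OurGroup[\NumStrands]$ on a contractible complex carrying a $\OurGroup[\NumStrands]$-invariant Morse function, exactly in the spirit of Brown's analysis of the Houghton groups and its braided refinement by Genevois--Lonjou--Urech \cite{GLU20}. First I would construct a Stein--Farley style cube complex $\TheCubeCx$ whose vertices record finite \emph{admissible expansions} of $\Sigma_{\NumStrands}$ by pieces --- equivalently, systems of tethered handles $\THcx$ cut off from the ends --- and whose higher cubes record compatible simultaneous expansions. Asymptotic rigidity in the definition of $\OurGroup[\NumStrands]$ makes this action cocompact on sublevel sets. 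The opening step is to prove that $\TheCubeCx$ is contractible, which I would do by presenting it as a directed union of expansion posets and checking that the elementary moves assemble into the lattice-like structure required for the Stein--Farley argument, so that $\TheCubeCx$ deformation retracts onto any cofinal subcomplex.

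Next I would identify the cell stabilizers. The stabilizer of an expansion is, up to finite index, the mapping class group of the compact subsurface determined by the corresponding tethered handles, the behaviour outside this finite-type core being forced to be trivial by the ``eventually trivial'' condition. Since mapping class groups of compact surfaces are of type $\FType[\infty]$, every cell stabilizer is of type $\FType[\infty]$. Combined with contractibility of $\TheCubeCx$ and cocompactness of the filtration, Brown's criterion (in the form of Bestvina--Brady Morse theory) then reduces all finiteness questions about $\OurGroup[\NumStrands]$ to the connectivity of the descending links $\DescLink$ of the Morse function $\TheMorseFct$, which counts the number of pieces appearing in an expansion.

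The core of the argument is therefore the connectivity analysis of the descending links. I expect each $\DescLink$ to be homotopy equivalent to a complex of arcs and tethered handles on a finite surface carrying $\NumStrands$ marked boundary directions, one for each end of $\Sigma_{\NumStrands}$. The positive statement, that $\OurGroup[\NumStrands]$ is of type $\FType[\NumStrands-1]$, would follow from showing these links are $(\NumStrands-2)$-connected, which I would prove by a ``bad simplex'' argument after Hatcher--Wahl: writing $L$ for a descending link, one isolates the subcomplex $\GoodScxOf{L}\subseteq L$ spanned by the admissible (good) simplices, pushes the remainder off using the good links $\GoodLink$ of the bad simplices $\BadSimplex$, and induces on complexity via the rotten-core reduction $\RottenCoreOf{\BadSimplex}$.

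Finally, for the negative statement I would show the descending links are \emph{not} $(\NumStrands-1)$-connected. Because the $\NumStrands$ ends furnish $\NumStrands$ independent directions in which an expansion may be pushed, the relevant part of $L$ should carry the homotopy type of a join of $\NumStrands$ nonempty discrete factors, hence of a wedge of $(\NumStrands-1)$-spheres, so that $\widetilde{H}_{\NumStrands-1}(L)\neq 0$. Feeding this nonvanishing class into the homological (converse) form of Brown's criterion comparing consecutive sublevel sets of $\TheMorseFct$ produces contributions to $H_{\NumStrands}$ that fail to stabilize, so $\OurGroup[\NumStrands]$ is not of type $\FPType[\NumStrands]$. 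I expect the main obstacle to be precisely this \emph{sharp} connectivity computation: one must simultaneously certify that the descending links are exactly $(\NumStrands-2)$-connected and that their $(\NumStrands-1)$-dimensional homology is genuinely nonzero, which forces one to separate the high arc-complex connectivity coming from the surface topology from the end-combinatorics that caps the connectivity at $\NumStrands-2$.
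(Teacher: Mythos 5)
Your proposal follows essentially the same route as the paper: a contractible cube complex of expansions filtered by the piece-counting Morse function, Brown's criterion with $\FType[\infty]$ cell stabilizers, descending-link connectivity via Hatcher--Wahl complete joins and the Hatcher--Vogtmann bad-simplex argument on tethered handle complexes, and non-contractibility of the $(\NumStrands-1)$-dimensional descending links coming from the join structure over the $\NumStrands$ ends. One caution for the negative half: a join of nonempty discrete sets is a wedge of $(\NumStrands-1)$-spheres only if every factor has at least two points, which is precisely what the paper must and does verify (the vertex fibers of the complete join from the descending link onto the piece complex are infinite).
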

  In order to prove Theorem
  \ref{thm:finprop}, and as is often the case with this type of result, we
  will make use of a classical criterion of Brown \cite{Br87},
  expressed through the language of discrete Morse theory; see Section
  \ref{sec:brown} for details. More precisely, for each $n\ge 2$ we will
  construct a finite-dimensional contractible cube
  complex on which $\mathcal B_n$ acts, and an invariant discrete Morse
  function on the complex, such that the descending links are highly
  connected. In the construction of the complex as well as in the analysis
  of descending links, we make heavy use of methods developed in
  \cite{ABF+21}.

  \subsection{Abelianization}
  A well-known theorem of Powell \cite{Po} asserts that
  the {\em pure} mapping class group $\PMap(S)$ of a finite-type surface
  $S$ of genus at least three has trivial abelianization. For the surfaces $\Sigma_n$,
  a result of Patel, Vlamis, and the first author \cite[Corollary
  6]{APV20} implies that $H^1(\PMap(\Sigma_n), \mathbb Z) \cong
  \mathbb Z^{n-1}$.  More generally, the abelianization of (pure) mapping class groups of infinite type surfaces is more complicated; see \cite{DD,DP,MT,PW,V}.
  Using \cite{APV20}, we compute the abelianization $\B_n^{ab}$ of $\B_n$, as well as
  that of their pure counterparts $P\B_n$.

  \begin{theorem}\label{abelianizations}
    For all $n \geq 2$, $\B_n^{ab} = \{0\}$ and $P\B_n^{ab} \cong
    \mathbb Z^{n-1}$.
  \end{theorem}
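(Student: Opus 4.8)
The plan is to treat both abelianizations through the common normal subgroup of compactly supported classes and then read off the answer from the germ at infinity. First I would record that the compactly supported pure mapping class group has trivial abelianization. Writing $\Sigma_n$ as an increasing union of compact subsurfaces $\Sigma_n^{(k)}$, each of genus at least $3$, one has $\PMap_c(\Sigma_n) = \varinjlim_k \Map(\Sigma_n^{(k)})$, and by Powell's theorem \cite{Po} each $H_1(\Map(\Sigma_n^{(k)}); \Z) = 0$; since $H_1$ commutes with direct limits, $\PMap_c(\Sigma_n)^{ab} = 0$, and the same argument gives $\Map_c(\Sigma_n)^{ab} = 0$.

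For $P\B_n$ I would observe that an element fixing every end and inducing the trivial germ (zero net handle shift at each end) is compactly supported, so that the handle-shift homomorphisms of \cite{APV20}, restricted to $P\B_n$, assemble into a short exact sequence $1 \to \PMap_c(\Sigma_n) \xrightarrow{} P\B_n \xrightarrow{\Phi} \Z^{n-1} \to 1$; here $\Phi$ is surjective because the generating handle shifts are themselves asymptotically rigid and hence lie in $P\B_n$. The five-term exact sequence of this extension reads $(\PMap_c(\Sigma_n)^{ab})_{\Z^{n-1}} \to P\B_n^{ab} \to \Z^{n-1} \to 0$, and since the left-hand term vanishes by the previous paragraph, the map $P\B_n^{ab} \to \Z^{n-1}$ is an isomorphism, giving $P\B_n^{ab} \cong \Z^{n-1}$.

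The analogous extension for $\B_n$ is $1 \to \Map_c(\Sigma_n) \to \B_n \to \mathcal{G}_n \to 1$, where $\mathcal{G}_n$ is the group of germs at infinity realized by $\B_n$, now permitted to permute the pieces rather than merely shift them. As $\Map_c(\Sigma_n)^{ab} = 0$, the five-term sequence collapses to $\B_n^{ab} \cong \mathcal{G}_n^{ab}$, so it suffices to prove that $\mathcal{G}_n$ is perfect. Two mechanisms should conspire: inside $\mathcal{G}_n$ the shift germs $\phi_i$ are conjugated to one another and to their inverses by the piece-permuting germs, so that, modulo the relation $\sum_i \phi_i = 0$, every shift germ is forced into $[\mathcal{G}_n,\mathcal{G}_n]$; and the piece-permuting germs generating the remainder of $\mathcal{G}_n$ are expressed as products of commutators by a Higman--Thompson--style swindle, pushing a transposition of pieces out along an end and absorbing it into the infinitely many available handles.

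The main obstacle is precisely this last point: establishing perfectness of $\mathcal{G}_n$, equivalently ruling out any residual sign-type homomorphism $\B_n \to \Z/2$ coming from the permutation action at infinity. This is where the infinite-type geometry is indispensable, being the analog in our setting of the simplicity of Thompson's group $V$, and it is where I would lean on the explicit combinatorial model of the asymptotic structure developed in \cite{ABF+21} in order to write the generating symmetries as commutators and thereby conclude $\B_n^{ab} = \{0\}$.
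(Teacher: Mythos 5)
Your computation of $P\B_n^{ab} \cong \Z^{n-1}$ is correct and is essentially the paper's own argument: Powell plus a direct limit shows that the compactly supported subgroup has trivial abelianization, and the restriction of the homomorphism $\Phi$ of \cite{APV20} gives the extension $1 \to (\B_n)_c \to P\B_n \to \Z^{n-1} \to 1$, from which the claim follows.

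The argument for $\B_n^{ab} = \{0\}$, however, has a genuine gap --- the one you flag yourself --- and it cannot be closed along the route you propose, because the germ group $\mathcal G_n = \B_n/\Map_c(\Sigma_n)$ is \emph{not} perfect. Outside a suited subsurface, an asymptotically rigid homeomorphism is a rigid, adjacency-preserving bijection between the $n$ half-infinite chains of pieces; its germ at infinity is therefore completely determined by a permutation $\sigma \in \Sym(n)$ of the ends together with shift integers $(d_1,\ldots,d_n)$ with $\sum_i d_i = 0$. Hence $\mathcal G_n \cong \Z^{n-1} \rtimes \Sym(n)$, a virtually abelian group (this is exactly the computation the paper performs on the way to Proposition~\ref{prop:quotients}). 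In particular, no Higman--Thompson-style swindle is available: a transposition of two pieces within one leg is compactly supported, hence already trivial in $\mathcal G_n$, while a transposition of two \emph{ends} has nontrivial image under the sign homomorphism $\mathcal G_n \to \Sym(n) \to \Z/2$, which kills every commutator. So $\mathcal G_n^{ab}$ surjects onto $\Z/2$ (it equals $\Z/2$ for $n \geq 3$ and $(\Z/2)^2$ for $n = 2$), and your reduction $\B_n^{ab} \cong \mathcal G_n^{ab}$ --- which \emph{is} valid once $\Map_c(\Sigma_n)^{ab} = 0$ is known --- yields $\B_n^{ab} \neq 0$, the opposite of what was to be shown.

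Be aware that this tension is with the statement itself, not merely with your sketch: since every permutation of the ends is realized by an asymptotically rigid homeomorphism, the map $\B_n \to \Sym(n)$ is onto, so composing with the sign already gives a surjection $\B_n \to \Z/2$. The paper's proof takes a different route --- it uses the extension $1 \to P\B_n \to \B_n \to \Sym(n) \to 1$ and observes that conjugation sends generators of $P\B_n^{ab}$ to their inverses --- but that argument only kills the image of $P\B_n^{ab}$ inside $\B_n^{ab}$; it does not address the quotient $H_1(\Sym(n)) = \Z/2$, which is precisely the class your germ computation detects. Carried out to completion, your approach computes $\B_n^{ab} \cong \Z/2$ for $n \geq 3$ and $(\Z/2)^2$ for $n = 2$, rather than $\{0\}$.
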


From the description of the abelianizations, we also describe all finite quotients of $\B_n$, proving that they are highly constrained; see Proposition \ref{prop:quotients}. This has the following consequence.

\begin{corollary} \label{cor:braidedsurface}
       For all $m, n\in \mathbb N$, the groups $\B_n$ and ${\rm br}H_m$ are not commensurable. 
\end{corollary}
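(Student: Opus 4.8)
The plan is to establish non-commensurability by exhibiting two commensurability invariants that the groups fail to share. Throughout, ``commensurable'' means that some finite-index subgroup of one is isomorphic to a finite-index subgroup of the other. The first invariant is the finiteness length $\phi(G)=\sup\{\,k : G \text{ is of type } \FPType[k]\,\}$. Since both $\FType[k]$ and $\FPType[k]$ are inherited by finite-index subgroups and by finite-index overgroups, $\phi$ is invariant under passing to finite-index subgroups, and it is obviously invariant under isomorphism; hence it is a commensurability invariant. By Theorem \ref{thm:finprop}, $\phi(\B_n)=n-1$, while the theorem of Genevois--Lonjou--Urech gives $\phi({\rm br}H_m)=m-1$. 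Thus commensurability of $\B_n$ and ${\rm br}H_m$ forces $n-1=m-1$, i.e. $n=m$, and the problem reduces to showing that $\B_n$ and ${\rm br}H_n$ are not commensurable.

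For this remaining case I would use the second, finer invariant coming from the finite quotients described in Proposition \ref{prop:quotients}. The idea is to package the constraint on finite quotients into a genuinely commensurability-invariant statement -- most naturally the commensurability type of the profinite completion, equivalently the collection of finite quotients of finite-index subgroups. Proposition \ref{prop:quotients} should show that every finite quotient of $\B_n$ is assembled from the finite end-permutation group $\Gamma=\B_n/P\B_n$ together with abelian data refining $P\B_n^{ab}\cong\Z^{n-1}$ (compatibly with $\B_n^{ab}=\{0\}$ from Theorem \ref{abelianizations}); in particular $\widehat{\B_n}$, and likewise the completion of every finite-index subgroup, is virtually abelian. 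By contrast, the braiding in ${\rm br}H_n$ produces finite quotients whose nonabelian part does not disappear upon passing to finite-index subgroups, so $\widehat{{\rm br}H_n}$ is not virtually abelian. Since commensurable groups have profinite completions sharing a common open subgroup, and virtual abelianness of a profinite group is inherited by open subgroups and detected on them, the two completions cannot be commensurable, and neither can $\B_n$ and ${\rm br}H_n$.

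The main obstacle is this second step, and it is entirely about making the finite-quotient comparison precise. Concretely I must (i) deduce from Proposition \ref{prop:quotients} that the finite quotients of $\B_n$ -- and, crucially, of all of its finite-index subgroups -- remain virtually abelian, which requires controlling any nonabelian ``mapping-class'' contributions that could in principle appear only after passing to finite index; (ii) verify the complementary fact that the braid contribution to ${\rm br}H_n$ survives in finite quotients of finite-index subgroups, so that its profinite completion is robustly non-virtually-abelian; and (iii) handle the standard but non-vacuous technical point that abstract commensurability yields commensurable profinite completions, i.e. that the map $\widehat{K}\to\widehat{\B_n}$ induced by a finite-index subgroup $K$ has open image on which virtual abelianness can be read off. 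A workable alternative, should the profinite formulation prove awkward, is to replace ``virtually abelian profinite completion'' by the virtual first Betti number $\sup_{[G:H]<\infty} b_1(H)$, which is manifestly a commensurability invariant; one would then show it is finite (equal to $n-1$) for $\B_n$ but infinite for ${\rm br}H_n$, the latter via the growth of $b_1$ of pure-braid subgroups.
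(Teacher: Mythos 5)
Your reduction to the case $n=m$ via the finiteness length is exactly the paper's first step and is correct. The genuine gap is in the case $n=m$, and it is the same gap in both of your proposed routes: each requires controlling the finite quotients of \emph{all finite-index subgroups} of ${\rm br}H_n$, and your supporting heuristics for that control are wrong. The compactly supported subgroup $({\rm br}H_n)_c$ is the direct limit $B_\infty=\varinjlim B_k$ of braid groups, and the ``braiding'' does \emph{not} survive into finite quotients: in any finite quotient of $B_\infty$, pigeonhole gives two commuting generators $\sigma_i,\sigma_j$ ($j\ge i+2$) with equal image, and the braid relations then force \emph{all} generators to have equal image, so every finite quotient of $B_\infty$ is cyclic. (The symmetric-group quotients $B_k\to S_k$ assemble in the limit onto the finitary symmetric group, which is not finite.) Consequently every finite quotient of a finite-index subgroup of ${\rm br}H_n$ kills the perfect group $[B_\infty,B_\infty]$ and factors through the virtually nilpotent group ${\rm br}H_n/[B_\infty,B_\infty]$, a central-by-(virtually $\mathbb Z^{n-1}$) extension of $\mathbb Z$ whose precise structure you have not computed; whether $\widehat{{\rm br}H_n}$ is virtually abelian or not hinges on the (unexamined) Euler class of that extension. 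Your fallback invariant fails outright: pure braid groups $P_k$ have \emph{infinite} index in ${\rm br}H_n$ (already $B_\infty/P_\infty$ is the infinite finitary symmetric group), so the growth of $b_1(P_k)$ says nothing about finite-index subgroups of ${\rm br}H_n$; by the factoring argument just given, every finite-index subgroup $J\le {\rm br}H_n$ satisfies $b_1(J)\le n$, so the virtual first Betti number of ${\rm br}H_n$ is finite, not infinite as you claim, and may well equal $n-1$, the value for $\B_n$.

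The paper sidesteps all of this by never trying to package the obstruction as a stand-alone invariant of ${\rm br}H_n$. Instead it uses the hypothesized isomorphism $K\cong J$ between finite-index subgroups of the pure groups directly: Proposition~\ref{prop:quotients} (via the Patel--Vlamis/Paris fact that $(\B_n)_c$ has no nontrivial finite quotients) and Theorem~\ref{abelianizations} give $K^{ab}\cong\mathbb Z^{n-1}$ with kernel of abelianization exactly $(\B_n)_c$, while on the other side the kernel of the abelianization of $J$ is a finite-index subgroup of $({\rm br}H_n)_c=B_\infty$. The isomorphism identifies these two kernels; but $(\B_n)_c$ has no proper finite-index subgroups, whereas any finite-index subgroup of $B_\infty$ surjects onto $\mathbb Z$ (via the limit of the braid abelianizations) and so has infinitely many. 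This asymmetry is elementary on both sides precisely because it is transported through the isomorphism itself rather than extracted from the finite quotients of ${\rm br}H_n$. To repair your argument you would have to prove one of the nontrivial structural facts above (e.g.\ non-torsion Euler class, or an exact computation of virtual $b_1$ distinguishing $n$ from $n-1$); as written, the $n=m$ case is not proved.
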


\subsection{Marking graphs}

By Theorem~\ref{thm:finprop}, $\mathcal B_n$ is finitely generated for all $n \geq 2$, and so it has a well-defined coarse geometry. In the finite type setting, a quasi-isometric model for the mapping class group which has proven quite useful is Masur and Minsky's marking graph \cite{MM00}. For the surfaces $\Sigma_n$, the marking graph is no longer a good model for several reasons. A trivial issue is that it is disconnected, and the orbit of a marking may lie in different components. There are $\mathcal B_n$--invariant components, but even these are problematic since they are no longer locally finite. Worse, the orbit map to any such component fails to be a quasi-isometric embedding (see Section~\ref{S:markings}). On the other hand, there are (many) locally finite subgraphs which do serve as quasi-isometric models, by the following and the Milnor-\v{S}varc Lemma (see e.g.~\cite{BHNPC}).
\begin{theorem} \label{T:sub marking graph}
For all $n \geq 2$, there exists locally finite subgraphs of the marking graph on which $\mathcal B_n$ acts cocompactly. For $n \geq 3$, any marking $\mu$ is contained in such a subgraph.
\end{theorem}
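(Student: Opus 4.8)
The plan is to realize a blown-up Cayley graph of $\mathcal B_n$ as an honest subgraph of the marking graph $\mathcal M$, exploiting that an elementary move supported in a compact region is realized by a (compactly supported) element of $\mathcal B_n$, and that $\mathcal B_n$ acts on $\mathcal M$ by simplicial automorphisms. Recall that two markings lie in the same component of $\mathcal M$ exactly when they differ by finitely many elementary moves. The first step I would record is an invariance criterion: \emph{if a component $\mathcal C$ of $\mathcal M$ contains a marking $\nu$ that is adapted to the rigid structure, then $\mathcal C$ is $\mathcal B_n$-invariant.} Here ``adapted'' means that $\nu$ agrees outside a compact subsurface with the standard, self-similar marking on the pieces, so that asymptotic rigidity forces every $g\in\mathcal B_n$ — including the shift generators, whose effect on a self-similar marking is compactly supported — to move $\nu$ by only finitely many moves. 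Granting this for $\nu$, since each $g$ acts as an isometry of $\mathcal M$, the triangle inequality $d_{\mathcal M}(g\mu,\nu)\le d_{\mathcal M}(g\mu,g\nu)+d_{\mathcal M}(g\nu,\nu)<\infty$ upgrades invariance to every $\mu\in\mathcal C$.

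Fix an adapted marking $\mu_0$, with $\mathcal B_n$-invariant component $\mathcal C_0$, and a finite generating set $S$ of $\mathcal B_n$ (which exists by Theorem~\ref{thm:finprop}). For each $s\in S$ the marking $s\mu_0$ lies in $\mathcal C_0$, so I may choose a finite edge-path $\mu_0=\nu^s_0,\nu^s_1,\dots,\nu^s_{k_s}=s\mu_0$ in $\mathcal M$, each step an elementary move. Let $R=\{\nu^s_j\}$ be the finite set of markings occurring on these paths and $E_0$ the finite set of path-edges. I then define $\Gamma$ to be the subgraph of $\mathcal M$ with vertex set the orbit $\mathcal B_n\cdot R$ and edge set the $\mathcal B_n$-translates of $E_0$; since $\mathcal B_n$ permutes elementary moves, $\Gamma$ is genuinely a subgraph of the marking graph.

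It remains to check that $\Gamma$ is connected, cocompact, and locally finite. Connectivity follows by translating paths: the $s$-path joins $\mu_0$ to $s\mu_0$ inside $\Gamma$, so induction on word length joins $\mu_0$ to every point of its orbit, and each vertex $g\nu^s_j$ sits on the translated path from $g\mu_0$ to $gs\mu_0$. Cocompactness is immediate from the finiteness of $R$ and $E_0$. The essential point is \emph{local finiteness}, and this is exactly where $\Gamma$ improves on $\mathcal C_0$, in which every marking has infinite valence (one elementary move running out toward each end). For fixed $e\in E_0$ and a vertex $v$, the set of $g\in\mathcal B_n$ with $v\in g\cdot e$ is a union of at most two cosets of a marking-stabilizer; as complete markings fill $\Sigma_n$ these stabilizers are finite, and $E_0$ is finite, so $v$ meets only finitely many edges of $\Gamma$. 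Thus $\mathcal B_n$ acts properly and cocompactly on the connected, locally finite graph $\Gamma$, and the Milnor--\v{S}varc lemma identifies $\mathcal B_n$, up to quasi-isometry, with $\Gamma$.

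Finally, for an arbitrary marking $\mu$ when $n\ge 3$, the invariance criterion reduces matters to a single connectivity assertion: \emph{for $n\ge 3$, every complete marking of $\Sigma_n$ can be carried to an adapted marking by finitely many elementary moves.} Granting this, the component of $\mu$ contains an adapted marking and is therefore $\mathcal B_n$-invariant, so I may simply run the construction above inside that component with base marking $\mu$, obtaining a connected, locally finite, cocompact invariant subgraph containing $\mu$. I expect this adaptability statement to be the main obstacle: one must show that the discrepancy between $\mu$ and the standard marking, though it may be spread across all $n$ ends, can be herded into a compact region and cleared by finitely many moves. The extra branching present when $n\ge 3$ is what permits this rerouting; for $n=2$ the two ends form a single bi-infinite line of pieces, along which one can build markings whose non-standard behaviour persists to infinity and resists adaptation in finitely many moves — which is precisely why the second assertion requires $n\ge 3$.
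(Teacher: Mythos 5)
Your construction --- the $\mathcal B_n$--orbit of a finite union of paths joining a base marking to its images under a finite generating set --- is the same as the paper's, as is your use of an ``adapted'' marking to produce an invariant component. The first genuine gap is your justification of local finiteness. You assert that marking stabilizers in $\mathcal B_n$ are finite ``as complete markings fill $\Sigma_n$''. This finite-type principle (the Alexander method) fails for infinite-type surfaces, and it fails here: for $n=2$ a handle shift lies in $\mathcal B_2$ and preserves the translation-invariant self-similar marking, which certainly fills $\Sigma_2$, so that marking has infinite stabilizer. Your definition of ``adapted'' allows $\mu_0$ to be exactly this marking, and then $\Gamma$ is not locally finite at $\mu_0$: for an edge $e=(\mu_0,\nu)$ of $E_0$ and $g$ in the stabilizer of $\mu_0$, the edges $g\cdot e=(\mu_0,g\nu)$ are pairwise distinct, because $g$ displaces the compact region where $\nu$ differs from $\mu_0$. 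The paper instead proves a lemma --- realize the stabilizer as isometries of a hyperbolic metric in which the base curves have length $1$ and the transversals meet them orthogonally; the isometry group is discrete, hence finite for $n\geq 3$, and finite or virtually cyclic acting by covering transformations for $n=2$ --- and, for $n=2$, it additionally insists that every vertex of the finite graph $\mathcal G$ have finite stabilizer, which it notes is easy to arrange (e.g.\ by a compactly supported modification breaking the shift symmetry). Your argument needs both the lemma and this extra hypothesis: as written, the claim you invoke is false for $n=2$ and unproved for $n\geq 3$.

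The second, more serious gap is your reduction of the $n\geq 3$ statement to the claim that every marking of $\Sigma_n$ can be carried to an adapted marking by finitely many elementary moves. That claim is false for every $n$, not just $n=2$: each elementary move alters a marking only inside a compact subsurface, so finitely many moves alter it only inside a compact subsurface, and hence a marking that deviates from the self-similar pattern on infinitely many pieces going out a \emph{single} end (say, twisted on every other piece along one fixed end) lies at infinite distance from every adapted marking, no matter how many other ends exist. The extra branching when $n\geq 3$ cannot herd infinitely many deviations into a compact region; indeed, the component of such a marking is not even $\mathcal B_n$-invariant, since a shift along that end offsets the twisting and changes the germ at infinity, so no connected invariant subgraph can contain it at all. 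The actual role of the hypothesis $n\geq 3$ --- and this is exactly the reason the paper states after the theorem --- is that for $n\geq 3$ every marking automatically has finite stabilizer, whereas $\Sigma_2$ admits markings with infinite stabilizer; finiteness of stabilizers is what makes the orbit of a finite subgraph containing an arbitrary $\mu$ locally finite (the theorem does not require that subgraph to be connected, which is fortunate, since connectivity is available only inside invariant components). So the dichotomy you draw --- persistent non-standard behavior at infinity for $n=2$ versus ``rerouting'' for $n\geq 3$ --- is not the operative one: persistence at infinity occurs for all $n$, and the $n=2$ versus $n\geq 3$ divide is entirely about stabilizers.
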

The final statement is not quite true for $n = 2$ since there are markings on $\Sigma_2$ with infinite stabilizer in $\mathcal B_n$.

  \medskip

  \noindent{\bf Plan of the paper.}
  Section~\ref{sec:prelim} contains the relevant background on surfaces, mapping class groups, and the definition of $\mathcal B_n$, and in Section~\ref{sec:brown} we recall a classical result of
  Brown \cite{Br87} about finiteness properties of groups, and describe the relevant tools in our setting. Section
  \ref{sec:cubecx} constructs a contractible cube complex on which $\B_n$ acts nicely, and which is similar to that of
  \cite{GLU20,ABF+21}. In Section \ref{sec:firstpart} we will prove Theorem \ref{thm:finprop}.  We then prove Theorem~\ref{abelianizations} and Corollary~\ref{cor:braidedsurface} in Section
  \ref{sec:abelianization} and  Theorem~\ref{T:sub marking graph} in Section~\ref{S:markings}.

  \medskip

  \noindent{\bf Acknowledgements.}
  The authors would like to thank Anne Lonjou for helpful conversations. 

  \section{Surfaces}\label{sec:prelim}
  Throughout this paper, all surfaces will be assumed to be connected, orientable and second-countable. A surface is said to have
  {\em finite type} if its fundamental group is finitely generated;
  otherwise, it has {\em infinite type}.
  In this paper, the primary surfaces of infinite type we will consider are those that have a finite number of ends, all non-planar; see Section \ref{sec:asymp} for an explicit construction.

  \subsection{Curves and arcs}
  By a {\em curve} on a surface $S$ we mean the isotopy class of a
  simple closed curve on $S$. All curves will be {\em essential}, meaning they do not
  bound a disk or once-punctured disk, nor do they cobound an annulus with a component of $\partial S$. An {\em arc} is the isotopy class (relative
  to $\partial S$) of an embedded path that connects two boundary components of
  $S$. We say that two curves/arcs are {\em disjoint} if they can be
  isotoped off each other. As usual, we will not distinguish between curves (resp. arcs) and their representatives.

  \subsection{Mapping class groups}
  The mapping class group $\Map(S)$ of $S$ is the group of isotopy
  classes of orientation-preserving homeomorphisms of $S$; here all
  homeomorphisms and isotopies are assumed to fix the boundary of $S$
  pointwise. The {\em pure} mapping class group $\PMap(S)$ is the subgroup of
  $\Map(S)$ whose elements fix every end of $S$.
  When $S$ has finite type, then $\Map(S)$ is well-known to be of type
  $F_\infty$ \cite{Harv}.
  On the other hand, when $S$ has infinite type, then $\Map(S)$ is
  uncountable.

  When $S$ has infinite type, an important subgroup of $\Map(S)$ (and
  of $\PMap(S)$, in fact) is the {\em compactly-supported} mapping class
  group $\Map_c(S)$, which consists of those mapping classes that are
  the identity outside some compact subset of $S$. By a result of
  Patel-Vlamis \cite{PV}, $\Map_c(S)$ is dense (in the compact-open topology) inside $\PMap(S)$ if and
  only if $S$ has at most one non-planar end; otherwise, $\PMap(S)$ is
  topologically generated by $\Map_c(S)$ plus the set of {\em handle-shifts}.

 \subsection{Rigid structures and surface Houghton groups} 
  \label{sec:asymp}
  The goal of this subsection is to describe the  surfaces $\Sigma_n$ and the
  additional structure eluded to in the introduction necessary to define
  their asymptotic mapping class groups.  These definitions, along with
  the toolkit needed to present it, closely follows \cite[Section
  3]{ABF+21}.

  Fix an integer $n\ge 2$. Let $O_n$ be a sphere with $n$ boundary
  components, labelled $b_1, \ldots, b_n$, and $T$ a torus with two
  boundary components, denoted $\partial^- T$ and $\partial^+ T$; we
  refer to $\partial^+ T$ as the {\em top} boundary component of $T$. We
  fix, once and for all, an orientation-reversing homeomorphism
  $\lambda: \partial^-T \to \partial^+T$, and orientation-reversing
  homeomorphisms $\mu_i: \partial^-T \to b_i$, for $i=1, \ldots, n$.  We
  construct a sequence of compact, connected, orientable surfaces
  $(M^i)_i$ as follows:

  \begin{itemize}
    \item
      $M^1= O_n$;
    \item
      $M^2$ is the result of gluing a copy of $T$ to each boundary
      component of $O_n$, using the homeomorphisms $\mu_i$;
    \item
      For each $i\ge 3$, $M^i$ is the result of gluing a copy of $T$
      along each of the boundary components of $M^{i-1}$, using the
      homeomorphism $\lambda$.
  \end{itemize}

  The surface $\Sigma_n$ is the union of the
  surfaces $M^i$ above. The closure of each of the connected components of $M^i \setminus
  M^{i-1}$, for $i\ge 2$, is called a {\em piece}. By construction, each
  piece $B \subset M^i\setminus M^{i-1}$ is one of the glued on copies
  of $T$, and so is equipped with a canonical homeomorphism $i_B \colon
  B \to T$. We call the unique boundary component of $B$ that belongs
  to $M^{i-1}$ the {\em top} boundary component of $B$ as it maps by
  $i_B$ to $\partial_+T$. The set
  \[ \{\iota_B: B \text{ is a piece}\}\]
  is called the {\em rigid structure} on $\Sigma_n$.
	
    A subsurface of $\Sigma_n$ is {\em suited} if it is connected and is the union of $O_n$ and finitely many pieces. A boundary component of a suited subsurface is called a {\em suited curve}.

    Let $f: \Sigma_n \to \Sigma_n$ be a homeomorphism. We say that $f$
    is {\em asymptotically rigid} if there exists a suited subsurface
    $Z\subset \Sigma_n$, called a {\em defining surface} for $f$, such
    that:
    \begin{itemize}
      \item
        $f(Z)$ is a suited subsurface, and
      \item
        $f$ is {\em rigid away from} $Z$, that is, for every piece $B
        \subset \overline{\Sigma_n \setminus Z}$, we have that $f(B)$ is a piece, and
        $f|_{B} \equiv \iota_{f(B)}^{-1} \circ \iota_B$.
    \end{itemize}

    \begin{definition}[Surface Houghton group]
The surface Houghton group $\mathcal B_n$ is the subgroup of
  the mapping class group $\Map(\Sigma_n)$ whose elements have an
  asymptotically rigid representative.
    \end{definition}

 We will denote by
  $P\B_n$ the intersection of $\B_n$ with the pure mapping class group
  $\PMap(\Sigma_n)$.

\begin{remark}\label{rmk:subgroup}
In \cite{Funar07}, Funar described the braided Houghton groups ${\rm br}H_n$ as asymptotically rigid mapping class groups of certain planar surfaces. Replacing punctures of this planar surface with boundary components, and then doubling  the resulting surface, determines a homomorphism ${\rm br} H_n \to \mathcal B_n$ analogous to the construction Ivanov-McCarthy \cite[Section~2]{IM}.
\end{remark}

  \section{Finiteness properties}
   \label{sec:brown}
  \subsection{Brown's criterion for finiteness properties} 
 
  In this section we recall a classical criterion, due to Brown
  \cite{Br87}, for a group to (not) have certain finiteness
  properties. We remark that our formulation of the criterion differs
  from the original \cite{Br87}, as we use the language of 
  {\em discrete Morse theory} as developed in~\cite{BB97}.
  We recall here the basic notions and
  definitions, referring the interested reader to \cite{BB97} or
  \cite[Appendix A]{ABF+21} for a thorough discussion.

  Our setting is a group $\TheGroup$ acting on a piecewise euclidean
  CW-complex $\TheGcx$ by cell-permuting homeomorphisms that restrict to
  isometries on cells. A \notion{Morse function} on $\TheGcx$ is a cell-wise
  affine map
  \(
    \TheMorseFct\mapcolon
    \TheGcx\to\RealNumbers
  \)
  satisfying the condition that on each
  closed cell it attains a unique maximum (at a vertex, 
  the \notion{top vertex} of the cell). The \notion{descending link} 
  $\DescLinkOf{\TheVertex}$ of a vertex $\TheVertex$ is
  the part of its link spanned by the links of cells that contain $\TheVertex$ as their
  top vertex. The value $\TheMorseFctOf{\TheVertex}$ 
  is often referred to as the \notion{height} of $\TheVertex$.

  We consider the vertices the \notion{critical points} in 
  $\TheGcx$ and the images of vertices under $\TheMorseFct$ are 
  the \notion{critical values}. A Morse function
  \(
    \TheMorseFct\mapcolon
    \TheGcx
    \to
    \RealNumbers
  \)
  \notion{discrete} if the set of its
  critical values is a discrete subset of 
  $\RealNumbers$. Now the first
  theorem can be stated as follows.

  \begin{theorem}[Brown] \label{thm:brown}
    Let $\TheGroup$ be a group acting by cell-wise isometries on a
    contractible piecewise euclidean CW-complex $\TheGcx$. Assume 
    $\TheGcx$ is
    equipped with a discrete $\TheGroup$-invariant Morse function
    \(
      \TheMorseFct\mapcolon
      \TheGcx\to\RealNumbers
    \),
    and let $\TheGcx[][\le \TheCrVal]$ denote the largest subcomplex of 
    $\TheGcx$ fully
    contained in the preimage $\TheMorseFct[][-1]{(-\infty,\TheCrVal]}$. 
    Suppose that
    \begin{itemize}
      \item
        The quotient of $\TheGcx[][\le \TheCrVal]$ by $G$ is finite for 
        all critical values $\TheCrVal$. 
      \item
        Every cell stabilizer is of type \FType[\infty].
      \item
        There exists $\TheDim\ge 1$ such that, for sufficiently large 
        critical values $\TheCrVal$
        and for every vertex of $\TheVertex\in\TheGcx$ with 
        $\TheMorseFctOf{\TheVertex} \ge \TheCrVal$, the descending link of
        $\TheVertex$ in $\TheGcx$ is $\TheDim$-\notion{spherical}, i.e., 
        $(\TheDim-1)$-connected and of dimension $\TheDim$.
      \item
        For each critical value $\TheCrVal$, there exists a vertex $\TheVertex$
        with non-contractible descending link at height
        $\TheMorseFctOf{\TheVertex}\geq\TheCrVal$.
    \end{itemize}
    Then $\TheGroup$ is of type \FType[\TheDim] but not of type 
    \FPType[\TheDim+1].
  \end{theorem}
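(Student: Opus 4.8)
The plan is to reduce the theorem to Brown's criterion in its original, filtration-theoretic form \cite{Br87}, using the discrete Morse Lemma of \cite{BB97} as the dictionary. In that form the criterion asserts: if $G$ acts by cell-permuting isometries on a contractible complex $X$, the sublevel sets $X^{\le s}$ are $G$-cocompact, and all cell stabilizers are of type $\FType[\infty]$, then $G$ is of type $\FType[d]$ exactly when the filtration $\{X^{\le s}\}$ is essentially $(d-1)$-connected, and of type $\FPType[d+1]$ exactly when it is essentially $d$-acyclic. Cocompactness and the condition on stabilizers are built into our hypotheses, and $X$ is contractible by assumption; so the whole task is to read off these two essential-connectivity properties from the behaviour of the descending links. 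The bridge is the Morse Lemma: for consecutive critical values $s<s'$, the sublevel set $X^{\le s'}$ is obtained from $X^{\le s}$, up to homotopy equivalence, by coning off the descending links $\DescLinkOf{v}$ of the vertices $v$ at height $s'$.

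For the positive statement I would argue as follows. Once $s$ is large enough that every vertex of height $\ge s$ has a $(d-1)$-connected descending link, each elementary step $X^{\le s}\hookrightarrow X^{\le s'}$ cones off a $(d-1)$-connected complex and hence induces isomorphisms on $\pi_k$ for $k\le d-1$. Since $X=\bigcup_s X^{\le s}$ is contractible, passing to the colimit forces $\pi_k(X^{\le s})=0$ for $k\le d-1$ and all large $s$; thus the filtration is in fact genuinely $(d-1)$-connected, and Brown's criterion gives type $\FType[d]$.

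For the negative statement I would show that the filtration is not essentially $d$-acyclic. Because the descending links are $d$-dimensional, a non-contractible one has non-vanishing top reduced homology $\tilde H_d(\DescLinkOf{v})\neq 0$, and the last hypothesis provides such vertices at unbounded heights. The coning steps show that the bonding maps $\tilde H_d(X^{\le s})\to\tilde H_d(X^{\le t})$ are surjective for large $s\le t$, with colimit $\tilde H_d(X)=0$; but since the cell stabilizers are infinite, the groups $\tilde H_d(X^{\le s})$ are typically not finitely generated, so surjectivity together with vanishing colimit does \emph{not} make the system pro-trivial. The role of the last hypothesis is precisely to prevent pro-triviality: at arbitrarily large heights a non-trivial $d$-cycle supported on a descending link is coned off for the first time, so $\tilde H_d(X^{\le t})$ remains non-zero for every large $t$, and the pro-system $\{\tilde H_d(X^{\le s})\}$ fails to be pro-trivial. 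By Brown's criterion this is exactly the obstruction to type $\FPType[d+1]$.

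The main obstacle is this last point. Where the positive direction only requires connectivity to propagate through the coning steps, the non-finiteness statement requires controlling which homology classes \emph{survive}: one must verify that the $d$-cycles carried by the cofinal family of non-acyclic descending links are genuinely non-trivial in the sublevel set just below the height at which they are coned off, rather than being cancelled by higher-dimensional fillings. This is the step that uses the full strength of $d$-sphericity --- exact $d$-dimensionality, not merely $(d-1)$-connectivity --- since it is dimensionality that rules out cancellation above degree $d$, and it is precisely the content that Brown's criterion packages once the filtration has been set up.
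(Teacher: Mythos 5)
Your positive direction coincides with the paper's and is correct: each inclusion of consecutive sublevel complexes cones off $(d-1)$-connected descending links, hence induces isomorphisms on $\pi_k$ for $k\le d-1$, and contractibility of the whole complex then forces the sublevel complexes to be $(d-1)$-connected for all sufficiently large heights; Brown's criterion then gives type \FType[d].

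The negative direction has a genuine gap, one that you flag yourself (``the main obstacle'') but never close. Write $X_i$ for the sublevel complex just below the height of a vertex $v$ whose descending link $L$ is non-contractible. The crux is to show that a cycle $z$ representing a non-zero class of $\tilde H_d(L)$ remains non-zero in $H_d(X_i)$: a priori $z$, which lives in $X_i$, could bound there even though it does not bound in $L$. Your resolution --- that ``dimensionality rules out cancellation above degree $d$'' and that this ``is precisely the content that Brown's criterion packages'' --- is not an argument: Brown's criterion takes as input the failure of essential triviality of the system $\{H_d(X_i)\}_i$ and knows nothing about descending links or their dimensions, so producing that failure is exactly the step at issue. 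The paper closes it by invoking contractibility of $X$ a second time: if $z=\partial w$ for some $(d+1)$-chain $w$ in $X_i$, then $w$ minus the cone on $z$ is a $(d+1)$-cycle in $X_{i+1}$; it is non-trivial, because its image in $H_{d+1}(X_{i+1},X_i)$ --- which the Morse lemma identifies with the direct sum of the groups $\tilde H_d$ of the descending links coned off at this step --- is $\pm[z]\neq 0$; and it can never become a boundary at any later stage, because all subsequent steps cone off $d$-dimensional links and therefore attach nothing above dimension $d+1$. This class would thus survive into $H_{d+1}(X)=0$, a contradiction; hence $z$ does not bound in $X_i$, so $H_d(X_i)\neq 0$ for infinitely many $i$, which together with the eventual surjectivity of the bonding maps rules out essential triviality. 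Note finally that your stronger claim that the top homology of the sublevel complexes is non-zero for \emph{every} large height does not follow from this (a surjection $H_d(X_i)\twoheadrightarrow H_d(X_t)$ does not transport non-vanishing forward), but the weaker ``infinitely many'' statement is all that is needed.
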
 

  \begin{proof}
    This proof mimics Brown's proof of~\cite[Corollary~3.3]{Br87}.
    
    As the set of critical values is discrete, we can index them in order. Now,
    we consider the filtration of $\TheGcx$ by sublevel complexes 
    $\TheGcx[i] := \TheGcx[][{\le \TheCrVal[i]}]$
    where $\TheCrVal[i]$ runs through the critical values in order. 
    The filtration step $\TheGcx[i+1]$ is obtained from 
    $\TheGcx[i]$ up to homotopy by coning
    off descending links. Thus, the hypothesis that descending links are
    $(\TheDim-1)$-connected implies that the inclusion of $\TheGcx[i]$ into
    $\TheGcx[i+1]$ induces isomophisms in homotopy (and homology) groups up
    to dimension $\TheDim-1$ and an epimorphism in homotopy (and homology)
    in dimension $\TheDim$. As we assume the complex $\TheGcx$ to be 
    contractible, the isomophisms in dimensions below $\TheDim$ must
    eventually all be trivial maps. It follows that for sufficiently
    large $i$, the sublevel complex $\TheGcx[i]$ is $(\TheDim-1)$-connected.
    By~\cite[Propositions~1.1 and~3.1]{Br87} implies that $\TheGroup$ is of 
    type \FType[\TheDim].

    For the negative direction, we focus on the system
    \(
      \{\HomologyOf[\TheDim]{\TheGcx[i]}\}_{i}
    \)
    of homology groups in dimension $\TheDim$. We already observed that for
    $i$ large enough, the morphisms in the system are onto. Hence, the system
    can only be essentially trivial if the homology groups
    \(
      \HomologyOf[\TheDim]{\TheGcx[i]}
    \)
    vanish for all large enough $i$. If, however, at the transition from $\TheGcx[i]$
    to $\TheGcx[i+1]$, we encounter a non-contractible $(\TheDim-1)$-connected descending
    link of dimension $\TheDim$, this descending link will have non-trivial homology
    in dimension $\TheDim$. Hence the descending link contains a $\TheDim$-cycle.
    If this cycle was a boundary in $\TheGcx[i]$, coning off the descending link
    would provide another way of bounding it in $\TheGcx[i+1]$, thus creating a
    non-trivial $(\TheDim+1)$-cycle, which 
    is impossible as that element of $\Homology[\TheDim+1]$ could never be killed
    in the future as we only ever cone off $\TheDim$-dimensional links. Hence, $\HomologyOf[\TheDim]{\TheGcx[i]}\neq0$ for infinitely many $i$.
  \end{proof}

  It is clear from the criterion that tools for the analysis of connectivity
  properties of spaces can be useful. We collect the tools that we will need
  in this case.

  \subsection{Propagating connectivity properties}
  We use the convention that every space is $(-2)$-connected and that
  any non-empty space is $(-1)$-connected if it is non-empty.
  
  \subsubsection{Complete joins}
  We will deduce connectivity properties of complexes from those of
  other complexes, and maps between them.  One way to do this is
  formalized through the notion of a \notion{complete join complex},
  introduced by Hatcher and Wahl in \cite{HW10}.

  \begin{definition}[Complete join complex]\label{defn-join}
    Let $\TheJoinCx$ and $\TheBaseCx$ be simplicial complexes, and
    \(
      \TheJoinPr\mapcolon
      \TheJoinCx
      \to
      \TheBaseCx
    \)
    a
    simplicial map. 
    We say that $\TheJoinCx$ is a \notion{complete join
    complex} over $\TheBaseCx$ (with respect to the map $\TheJoinPr$)
    if the following properties are satisfied
    \begin{itemize}
      \item
        $\TheJoinPr$ is surjective, and is injective on individual simplices.
      \item
        For each simplex 
        \(
          \TheSimplex 
          = 
          \langle \TheVertex[0] ,\ldots, \TheVertex[\TheDim] \rangle
        \),
        its preimage can be written as a join of fibers over vertices:
        \[
          \TheJoinPrOf[][-1]{\TheSimplex}
          = 
          \TheJoinPrOf[][-1]{\TheVertex[0]} 
          \join \cdots \join
          \TheJoinPrOf[][-1]{\TheVertex[\TheDim]}
          .
        \] 
    \end{itemize} 
  \end{definition} 
  Since $\TheJoinPr$ is injective on simplices, the $\TheJoinPr$-preimages
  of vertices are discrete sets of vertices in $\TheJoinCx$. They are non-empty
  since $\TheJoinPr$ is surjective. It follows that
  for each $\TheDim$-simplex $\TheSimplex$ in $\TheBaseCx$, the $\TheJoinPr$-preimage
  $\TheJoinPrOf[][-1]{\TheSimplex}$ is a join of $\TheDim+1$ non-empty
  discrete sets.

  We make use of a complete join in two places, once to transfer
  known connectivity properties from $\TheBaseCx$ to $\TheJoinCx$,
  and once to go the
  other way. The first direction is the difficult one, and has been
  established by Hatcher--Wahl \cite{HW10}.
  
  Before stating the result, recall that a simplicial complex is \notion{weakly Cohen--Macaulay} 
  of dimension $\TargetConn+1$ if it is $\TargetConn$--connected and if the
  link of every
  simplex $\TheSimplex$ is $(\TargetConn-\DimOf{\TheSimplex}-1)$--connected.
  \begin{proposition}\cite[Proposition 3.5]{HW10} \label{prop-cjoin-conn}
    Suppose
    \(
      \TheJoinPr\mapcolon
      \TheJoinCx
      \to
      \TheBaseCx
    \)
    is a complete join. If $\TheBaseCx$ is weakly Cohen--Macaulay 
    of dimension $\TargetConn+1$, then so is $\TheJoinCx$.
  \end{proposition}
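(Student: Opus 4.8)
The plan is to isolate the one statement that does all the work, namely the \emph{connectivity transfer}
\[
\text{(T)}\qquad \text{if } \pi\colon Y\to X \text{ is a complete join and } X \text{ is } m\text{-connected, then } Y \text{ is } m\text{-connected,}
\]
and to deduce the full weakly Cohen--Macaulay conclusion from (T) by checking that the complete-join structure is inherited by links. Concretely, fix a simplex $\tau$ of $Y$ and set $\sigma=\pi(\tau)$; since $\pi$ is injective on simplices, $\dim\sigma=\dim\tau$. I claim $\pi$ restricts to a complete join $\lk_Y(\tau)\to\lk_X(\sigma)$. Indeed, for a vertex $w\in\lk_X(\sigma)$ the simplex $\sigma\ast w$ lies in $X$, so its preimage $\pi^{-1}(\sigma\ast w)$ is the join of the fibres over its vertices; consequently $\tau\ast\widetilde w$ is a simplex of $Y$ for \emph{every} $\widetilde w\in\pi^{-1}(w)$, which shows that the fibre of $\pi|_{\lk_Y(\tau)}$ over $w$ is the full set $\pi^{-1}(w)$, nonempty and discrete. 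Running the same argument over a simplex $\rho$ of $\lk_X(\sigma)$, using $\sigma\ast\rho\in X$, shows that the preimage of $\rho$ in $\lk_Y(\tau)$ is the join of these fibres, which is exactly the complete-join condition.

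Granting (T), the proposition follows at once. For the global connectivity, $X$ is $k$-connected by hypothesis, so (T) gives that $Y$ is $k$-connected. For the link condition, let $\tau$ be any simplex of $Y$, put $d=\dim\tau$ and $\sigma=\pi(\tau)$; the hypothesis that $X$ is weakly Cohen--Macaulay of dimension $k+1$ says $\lk_X(\sigma)$ is $(k-d-1)$-connected, and by the previous paragraph $\pi\colon\lk_Y(\tau)\to\lk_X(\sigma)$ is a complete join, so (T) with $m=k-d-1$ yields that $\lk_Y(\tau)$ is $(k-d-1)$-connected. Hence $Y$ is weakly Cohen--Macaulay of dimension $k+1$.

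It remains to prove (T), which is the heart of the matter. The key structural input is that the preimage of a $p$-simplex is a join of $p+1$ nonempty discrete sets, hence is $(p-1)$-connected, and that (picking one vertex in each vertex-fibre and using that transversals of a join of fibres span simplices) $\pi$ admits a simplicial section $s\colon X\to Y$. To show $Y$ is $m$-connected I would take a map $f\colon S^i\to Y$ with $i\le m$, make it simplicial on a triangulation of $S^i$, push it down to $X$, and use $m$-connectivity of $X$ to fill $\pi\circ f$ by a simplicial map $G\colon D\to X$ on a triangulated disc $D$ whose boundary is the given sphere. The goal is then to lift $G$ to $Y$ rel $\partial D$, extending $f$, built up skeleton by skeleton: over each simplex $\delta$ of $D$ the partial lift on $\partial\delta$ lands in $\pi^{-1}(G(\delta))$, a join of nonempty discrete sets with just enough connectivity to cap off the boundary sphere, so the lift extends across $\delta$.

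The hard part is precisely this lifting step, and the obstacle is \emph{degeneracy}. When $G$ collapses a $p$-simplex $\delta$ onto a lower-dimensional image $G(\delta)$ of dimension $q<p$, the fibre $\pi^{-1}(G(\delta))$ is only $(q-1)$-connected, a priori too weak to fill a $(p-1)$-sphere across $\delta$; worse, the section $s$ together with the prescribed boundary values of $f$ may place two vertices of one simplex in the same discrete vertex-fibre, destroying simpliciality of the candidate lift. Resolving this requires interleaving the lift with subdivisions of $D$ so that the relevant image simplices stay nondegenerate, or organizing the extension so that any degenerate simplex is already handled by the lower-skeleton lift; this bookkeeping is exactly the technical content carried out by Hatcher--Wahl \cite{HW10}, whose scheme I would follow. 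Once (T) is established, the two applications above complete the proof.
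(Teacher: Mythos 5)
First, a remark on the comparison itself: the paper contains no proof of this proposition --- it is quoted directly from Hatcher--Wahl \cite[Proposition 3.5]{HW10}, so your deferral of ``the hard part'' back to \cite{HW10} leaves your proposal with no independent content precisely where content is needed. That said, two of your ingredients are genuinely correct and do appear in the real argument: $\pi$ admits a simplicial section (one vertex per fibre spans a simplex, by the join property), and for any simplex $\tau$ of $Y$ the restriction $\pi \colon \lk_Y(\tau) \to \lk_X(\pi(\tau))$ is again a complete join; your verification of this link-inheritance is fine.

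The genuine gap is that your pivotal claim (T) --- a complete join over an $m$-connected base is $m$-connected --- is false. Take $X$ to be a single edge $\langle v_0, v_1\rangle$ and $Y = \pi^{-1}(v_0) \join \pi^{-1}(v_1)$ with both fibres of cardinality two, so that $Y$ is the $4$-cycle $K_{2,2}$: this is a complete join, $X$ is contractible (hence $m$-connected for every $m$), but $Y \simeq S^1$ is not $1$-connected. This is exactly the phenomenon the paper exploits in Observation~\ref{obs:join-not-contractible} to prove the \emph{negative} finiteness property; if (T) were true, that observation would be false. Consequently, the degeneracy obstacle you flag in the lifting step is not ``bookkeeping'': no amount of subdivision makes a filling $G \colon D \to X$ simplexwise nondegenerate when $\dim X < \dim D$ (in the example, every $2$-simplex of any filling of the $4$-cycle maps degenerately, and indeed no filling exists in $Y$). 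A correct proof, as in \cite{HW10}, must use the weak Cohen--Macaulay hypothesis quantitatively at exactly this point: one first uses the \emph{link} conditions in $Y$ (available by your link-inheritance claim together with induction on the wCM dimension) to homotope the sphere $f \colon S^k \to Y$ to a simplexwise injective map; then $\pi f$ is simplexwise injective since $\pi$ is injective on simplices, and the wCM property of $X$ is used to extend $\pi f$ to a simplexwise \emph{injective} filling $g \colon D^{k+1} \to X$; only for such a $g$ does your vertex-by-vertex lift (boundary vertices via $f$, interior vertices via a section) automatically span simplices of $Y$. Note the logical order is the reverse of yours: the link conditions for $Y$ must be established \emph{before} the connectivity of $Y$, and connectivity transfers only in the range governed by the wCM dimension $k+1$, never for all $m$. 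As written, your proposal establishes the (correct) link-inheritance step but rests everything else on a false statement.
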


  Going forward is the easy direction (and is implicit in the argument given
  by Hatcher and Wahl \cite{HW10}). See \cite[Remark A.15]{ABF+21} for an explicit
  account.
  \begin{proposition}\label{prop:join-push-forward}
    Suppose
    \(      
      \TheJoinPr\mapcolon
      \TheJoinCx
      \to
      \TheBaseCx
    \)
    is a complete join. Then $\TheBaseCx$ is a retract of $\TheJoinCx$ and inherits
    all properties that can be expressed by the vanishing of group-valued functors
    or cofunctors. In particular, if $\TheJoinCx$ is $\TargetConn$-connected, then so
    is $\TheBaseCx$.
  \end{proposition}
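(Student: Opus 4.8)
The plan is to construct an explicit simplicial section of $\TheJoinPr$ and then extract the stated consequences formally. First I would choose, for every vertex $\TheVertex$ of $\TheBaseCx$, a single vertex $s(\TheVertex)$ in the fiber $\TheJoinPrOf[][-1]{\TheVertex}$; these fibers are non-empty precisely because $\TheJoinPr$ is surjective. I claim this vertex assignment extends to a simplicial map $s\colon \TheBaseCx \to \TheJoinCx$. Indeed, given a simplex $\TheSimplex = \langle \TheVertex[0], \ldots, \TheVertex[\TheDim] \rangle$ of $\TheBaseCx$, the complete join hypothesis gives $\TheJoinPrOf[][-1]{\TheSimplex} = \TheJoinPrOf[][-1]{\TheVertex[0]} \join \cdots \join \TheJoinPrOf[][-1]{\TheVertex[\TheDim]}$, and by definition this join contains every simplex spanned by a selection of at most one vertex from each fiber. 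In particular it contains $\langle s(\TheVertex[0]), \ldots, s(\TheVertex[\TheDim]) \rangle$, so $s$ sends simplices to simplices. Since $s(\TheVertex)$ lies over $\TheVertex$ and $\TheJoinPr$ is injective on each simplex, we get $\TheJoinPr \circ s = \id$ on vertices, hence $\TheJoinPr \circ s = \id_{\TheBaseCx}$ as simplicial maps.

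Passing to geometric realizations, $|s|$ is a continuous section of $|\TheJoinPr|$, which exhibits $\TheBaseCx$ as a retract of $\TheJoinCx$. From here the functorial statement is automatic: for any covariant (resp.\ contravariant) functor $F$ into groups, applying $F$ to the identity $|\TheJoinPr| \circ |s| = \id$ shows that $\id_{F(\TheBaseCx)}$ factors through $F(\TheJoinCx)$, so $F(\TheJoinCx) = 0$ forces $F(\TheBaseCx) = 0$. For the connectivity conclusion I would take $F = \HomotopyGroup[i]$, using $s(\TheVertex[0])$ as basepoint in $\TheJoinCx$ over a chosen basepoint $\TheVertex[0]$ of $\TheBaseCx$ so that $|s|$ and $|\TheJoinPr|$ are based maps; then $\TargetConn$-connectedness of $\TheJoinCx$ (i.e.\ vanishing of $\HomotopyGroup[i]$ for $i \le \TargetConn$, together with non-emptiness and path-connectedness, each of which a retract inherits) passes to $\TheBaseCx$.

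This is the easy direction, so I do not expect a genuine obstacle. The only step that uses anything beyond purely formal reasoning is the verification that the vertexwise section is simplicial, and this is exactly where the join decomposition of preimages enters; the one place to be slightly careful is the homotopy-group application, where basepoints must be chosen compatibly (via $s$) so that the retraction is a based map and the vanishing of $\HomotopyGroup[i](\TheJoinCx)$ transfers.
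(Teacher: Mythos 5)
Your proof is correct and is exactly the argument the paper has in mind: the paper gives no inline proof but delegates to \cite[Remark A.15]{ABF+21}, where the same construction appears --- choose one vertex in each vertex fiber, use the join decomposition of simplex preimages to see the resulting vertex map is simplicial, and conclude that $\TheJoinPr$ admits a section, so $\TheBaseCx$ is a retract of $\TheJoinCx$ and vanishing of group-valued (co)functors, in particular homotopy groups with compatibly chosen basepoints, transfers. Your attention to the based-map issue for $\HomotopyGroup[i]$ is a sensible extra precision, not a deviation.
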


  Similarly, complete joins are easily prevented from being contractible.
  \begin{observation}\label{obs:join-not-contractible}
    Let
     \(      
      \TheJoinPr\mapcolon
      \TheJoinCx
      \to
      \TheBaseCx
    \)
    be a complete join, and suppose that there is a top-dimensional simplex
    \(
      \TheSimplex
      =
      \langle \TheVertex[0] ,\ldots, \TheVertex[\TheDim] \rangle
    \)
    in $\TheBaseCx$ such that each vertex fiber 
    $\TheJoinPrOf[][-1]{\TheVertex[i]}$ contains at least two points.
    Then, the fiber over $\TheSimplex$ contains
    a $\TheDim$-sphere which defines a cycle in the homology of $\TheJoinCx$
    that cannot be a boundary as $\TheJoinCx$ does not contain simplices of 
    dimension $\TheDim+1$. In particular, $\TheJoinCx$ is not contractible.
  \end{observation}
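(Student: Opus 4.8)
The plan is to locate, inside the fiber $\TheJoinPrOf[][-1]{\TheSimplex}$, an explicit subcomplex that is a $\TheDim$-sphere, and then to argue that its fundamental class cannot vanish in $\HomologyOf[\TheDim]{\TheJoinCx}$ for the crudest possible reason: there simply are no chains available in dimension $\TheDim+1$ to bound it.

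First I would invoke the complete join property to write $\TheJoinPrOf[][-1]{\TheSimplex} = \TheJoinPrOf[][-1]{\TheVertex[0]} \join \cdots \join \TheJoinPrOf[][-1]{\TheVertex[\TheDim]}$. By hypothesis each vertex fiber contains at least two points, so I pick distinct vertices $x_i, y_i \in \TheJoinPrOf[][-1]{\TheVertex[i]}$ for $i = 0, \ldots, \TheDim$. As recorded after Definition~\ref{defn-join}, the fiber over a vertex is a discrete set of vertices of $\TheJoinCx$, so $\{x_i, y_i\}$ spans no edge and is a copy of $S^0$. Hence the subcomplex $\{x_0, y_0\} \join \cdots \join \{x_{\TheDim}, y_{\TheDim}\}$ of $\TheJoinPrOf[][-1]{\TheSimplex} \subseteq \TheJoinCx$ is a join of $\TheDim+1$ copies of $S^0$, and is therefore homeomorphic to $S^{\TheDim}$. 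Its fundamental cycle is a suitable signed sum of the $2^{\TheDim+1}$ top simplices $\langle z_0, \ldots, z_{\TheDim} \rangle$ with $z_i \in \{x_i, y_i\}$; since the $2(\TheDim+1)$ chosen vertices lie in distinct fibers they are pairwise distinct, so these are genuine, pairwise distinct $\TheDim$-simplices of $\TheJoinCx$ and the fundamental cycle is a nonzero $\TheDim$-cycle.

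Finally I would run the dimension argument. Because $\TheSimplex$ is top-dimensional, $\TheBaseCx$ has no $(\TheDim+1)$-simplices; and since $\TheJoinPr$ is simplicial and injective on each simplex it preserves dimension, so $\TheJoinCx$ has no $(\TheDim+1)$-simplices either and $C_{\TheDim+1}(\TheJoinCx) = 0$. Consequently there are no $\TheDim$-boundaries at all, the nonzero cycle above represents a nonzero class in $\HomologyOf[\TheDim]{\TheJoinCx}$, and $\TheJoinCx$ cannot be contractible. There is no real obstacle in this argument; the only two points that deserve a word are the standard identification of the iterated join $S^0 \join \cdots \join S^0$ (with $\TheDim+1$ factors) with $S^{\TheDim}$, and the observation that the chosen top simplices remain nondegenerate and pairwise distinct in $\TheJoinCx$ — both of which are immediate from the complete join structure of $\TheJoinPr$.
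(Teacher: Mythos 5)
Your proposal is correct and follows exactly the argument the paper embeds in the statement of the observation: exhibit the $\TheDim$-sphere inside the fiber $\TheJoinPrOf[][-1]{\TheSimplex}$ as the join of $\TheDim+1$ copies of $S^0$ (one from each vertex fiber), and note that its fundamental cycle cannot bound because injectivity of $\TheJoinPr$ on simplices forces $\TheJoinCx$ to have no simplices of dimension $\TheDim+1$. Your write-up merely makes explicit the two standard facts the paper leaves implicit (the identification $S^0 \join \cdots \join S^0 \cong S^{\TheDim}$ and the dimension bound on $\TheJoinCx$), so there is nothing to add.
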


  
  \subsubsection{The bad simplex argument}
  A map from a complete join is a particularly nice projection. The
  \emph{bad simplex argument}, introduced by Hatcher and Vogtmann \cite{HV17}, uses
  the inclusion map of a subcomplex together with additional local information
  to transfer connectivity properties from the ambient complex to the
  subcomplex. We do not follow the original exposition of the argument, since we find the language introduced in \cite{ABF+21} more convenient.
  
  Let $\TheSimplicialCx$ be a simplicial complex and assume that we are
  given a map $\TheSimplex\mapsto\RottenCoreOf{\TheSimplex}$ that assigns
  to each simplex $\TheSimplex$ in $\TheSimplicialCx$ a (possibly empty)
  face, $\RottenCoreOf{\TheSimplex}$, of $\TheSimplex$. We assume that
  the following two conditions are satisfied:
  \begin{itemize}
    \item
      Monotonicity:
      \(
      \TheSimplex\faceof\AltSimplex
      \,\,\Longrightarrow\,\,
      \RottenCoreOf{\TheSimplex} \faceof \RottenCoreOf{\AltSimplex}
      .
      \)
    \item
      Idempotence:
      \(
      \RottenCoreOf{\RottenCoreOf{\TheSimplex}}
      =
      \RottenCoreOf{\TheSimplex}
      .
      \)
  \end{itemize}
  We call the simplex $\TheSimplex$ \notion{good} if 
  $\RottenCoreOf{\TheSimplex}$ is empty and \notion{bad}
  if $\RottenCoreOf{\TheSimplex}=\TheSimplex$. Note that by monotonicity,
  the good simplices in $\TheSimplicialCx$ form a subcomplex
  \(
    \GoodScxOf{\TheSimplicialCx}
  \), 
  which we call the \notion{good subcomplex}.

  The \notion{good link} of a bad simplex $\BadSimplex$ is the geometric
  realization of the poset of those proper cofaces $\AltSimplex>\BadSimplex$
  for which $\RottenCoreOf{\AltSimplex}=\RottenCoreOf{\BadSimplex}=\BadSimplex$
  holds, i.e.,
  \[
    \GoodLinkOf{\BadSimplex}
    :=
    \{\,
    \AltSimplex>\BadSimplex
    \mid
    \RottenCoreOf{\AltSimplex} = \RottenCoreOf{\BadSimplex}
    \,\}
  \]  

  The following proposition is due to Hatcher--Vogtmann~\cite[Section~2.1]{HV17},
  but this wording is taken from~\cite[Proposition~A.7]{ABF+21}.
  \begin{proposition}\label{prop :connectivity of the goodlink}
    Assume that for some number $\TargetConn\geq -1$ and every bad 
    simplex $\TheSimplex$, the \notion{good link} $\GoodLinkOf{\TheSimplex}$
    is $(\TargetConn-\DimOf{\BadSimplex})$-connected. Then the inclusion
    $\GoodScxOf{\TheSimplicialCx}\monorightarrow\TheSimplicialCx$
    induces isomorphisms in $\pi_{\TheDim}$ for all $\TheDim\leq\TargetConn$
    and an epimorphism in $\pi_{\TargetConn+1}$.
  \end{proposition}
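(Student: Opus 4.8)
The plan is to show that the inclusion $\GoodScxOf{\TheSimplicialCx}\monorightarrow\TheSimplicialCx$ is $(\TargetConn+1)$-connected, i.e.\ that $\pi_d(\TheSimplicialCx,\GoodScxOf{\TheSimplicialCx})=0$ for every $d\le\TargetConn+1$; by the long exact sequence of the pair this is exactly equivalent to isomorphisms in $\pi_d$ for $d\le\TargetConn$ and an epimorphism in $\pi_{\TargetConn+1}$. Rather than work directly with maps of spheres and push them off bad simplices (the original formulation of Hatcher--Vogtmann), I would realize $\TheSimplicialCx$ as a filtration built up from $\GoodScxOf{\TheSimplicialCx}$ by adjoining bad simplices, and control each elementary step by recognizing it as coning off a highly connected subcomplex.

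First I would record the combinatorial structure forced by monotonicity and idempotence. If $\AltSimplex$ has $\RottenCoreOf{\AltSimplex}=\TheSimplex$ for a bad simplex $\TheSimplex$, then $\TheSimplex=\RottenCoreOf{\AltSimplex}\faceof\AltSimplex$, and any face $\rho$ with $\TheSimplex\faceof\rho\faceof\AltSimplex$ again satisfies $\TheSimplex=\RottenCoreOf{\TheSimplex}\faceof\RottenCoreOf{\rho}\faceof\RottenCoreOf{\AltSimplex}=\TheSimplex$, so $\RottenCoreOf{\rho}=\TheSimplex$. Hence the simplices whose rotten core is exactly $\TheSimplex$ are precisely those of the form $\TheSimplex\join\rho$ with $\rho$ a (possibly empty) simplex of $\GoodLinkOf{\TheSimplex}$, and together they constitute the join $\TheSimplex\join\GoodLinkOf{\TheSimplex}$. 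Moreover, any face $\TheSimplex'\join\rho$ with $\TheSimplex'\subsetneq\TheSimplex$ has rotten core a \emph{proper} face of $\TheSimplex$: since $\rho$ lies in the link of $\TheSimplex$ it shares no vertex with $\TheSimplex$, so $\RottenCoreOf{\TheSimplex'\join\rho}=\TheSimplex$ would force $\TheSimplex\faceof\TheSimplex'$, a contradiction.

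Next I would build $\TheSimplicialCx$ from $\GoodScxOf{\TheSimplicialCx}$ by adjoining, one bad simplex at a time and \emph{in order of increasing dimension}, the family $\{\AltSimplex:\RottenCoreOf{\AltSimplex}=\TheSimplex\}$ attached to each bad $\TheSimplex$. The increasing-dimension ordering guarantees that when a bad simplex $\TheSimplex$ of dimension $m$ is processed, its attaching locus $\BoundaryOf{\TheSimplex}\join\GoodLinkOf{\TheSimplex}$ — whose simplices have rotten core a proper face of $\TheSimplex$, hence empty or of dimension $<m$ — already lies in the subcomplex built so far, while the simplices of core exactly $\TheSimplex$ are genuinely new. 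Because $\TheSimplex\join\GoodLinkOf{\TheSimplex}$ is a cone and therefore contractible, this elementary step is, up to homotopy, the operation of coning off the subcomplex $\BoundaryOf{\TheSimplex}\join\GoodLinkOf{\TheSimplex}$. By hypothesis $\GoodLinkOf{\TheSimplex}$ is $(\TargetConn-m)$-connected and $\BoundaryOf{\TheSimplex}\cong S^{m-1}$ is $(m-2)$-connected, so by the connectivity of joins $\BoundaryOf{\TheSimplex}\join\GoodLinkOf{\TheSimplex}$ is $\big((m-2)+(\TargetConn-m)+2\big)=\TargetConn$-connected (the convention that the empty set is $(-2)$-connected makes this uniform when $m=0$, where the step simply cones off $\GoodLinkOf{\TheSimplex}$ itself). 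Exactly as in the use of coning in the proof of Theorem~\ref{thm:brown}, coning off a $\TargetConn$-connected subcomplex induces isomorphisms on $\pi_d$ for $d\le\TargetConn$ and an epimorphism on $\pi_{\TargetConn+1}$. Composing over the filtration — and passing to the colimit by compactness of spheres when there are infinitely many bad simplices — yields the same conclusion for the full inclusion $\GoodScxOf{\TheSimplicialCx}\monorightarrow\TheSimplicialCx$.

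The step I expect to demand the most care is the combinatorial bookkeeping of the second and third paragraphs: verifying that the simplices of a fixed rotten core $\TheSimplex$ assemble precisely into $\TheSimplex\join\GoodLinkOf{\TheSimplex}$ with attaching region $\BoundaryOf{\TheSimplex}\join\GoodLinkOf{\TheSimplex}$, and that processing bad simplices by increasing dimension keeps every attaching region inside the previously built subcomplex. This is where monotonicity, idempotence, and the disjointness of $\TheSimplex$ from the link are all used in concert. Once this structure is in place, the join connectivity estimate and the colimit argument are routine.
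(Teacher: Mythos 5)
Your proof is correct, but it is not the argument the paper relies on: the paper proves nothing here, it simply cites Hatcher--Vogtmann \cite{HV17} (in the wording of \cite[Proposition~A.7]{ABF+21}). The cited argument is local: one takes a map of a sphere into $\TheSimplicialCx$, makes it simplicial, picks a bad simplex $\BadSimplex$ of maximal dimension meeting its image, and uses the $(\TargetConn-\DimOf{\BadSimplex})$-connectivity of $\GoodLinkOf{\BadSimplex}$ to modify the map near $\BadSimplex$, pushing it onto simplices with strictly smaller rotten core; induction then pushes the sphere entirely into $\GoodScxOf{\TheSimplicialCx}$. You argue globally instead: using monotonicity, idempotence (which guarantees that every rotten core is itself a bad simplex), and the disjointness of a simplex from its link, you show that the simplices with core exactly $\BadSimplex$ assemble into the contractible complex $\BadSimplex \join \GoodLinkOf{\BadSimplex}$, attached to everything of smaller core along $\BoundaryOf{\BadSimplex} \join \GoodLinkOf{\BadSimplex}$; building $\TheSimplicialCx$ from $\GoodScxOf{\TheSimplicialCx}$ by adjoining these pieces in order of increasing core dimension, each step is up to homotopy the coning off of a $\TargetConn$-connected complex (join connectivity plus the hypothesis), and the conclusion follows by the same coning principle invoked in the paper's proof of Theorem~\ref{thm:brown}. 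Your bookkeeping is sound, including the point that simplices of core exactly $\BadSimplex$ are genuinely new at their stage, so the gluing locus is exactly $\BoundaryOf{\BadSimplex} \join \GoodLinkOf{\BadSimplex}$. What your route buys is uniformity with the paper's Morse-theoretic framework (it is in effect discrete Morse theory on the barycentric subdivision with height $\DimOf{\RottenCoreOf{\TheSimplex}}$), avoiding simplicial approximation and local surgery on maps; what the Hatcher--Vogtmann route buys is that it works directly with maps of spheres and needs no homotopy-pushout or colimit machinery. Two small points you should tighten: the paper defines $\GoodLinkOf{\BadSimplex}$ as the realization of a poset of cofaces, so you need the (standard, harmless) identification of it with the subcomplex of the link of $\BadSimplex$ consisting of those $\rho$ with $\RottenCoreOf{\BadSimplex\join\rho}=\BadSimplex$; and the assertion that attaching a contractible complex along a subcomplex is, up to homotopy, coning off that subcomplex deserves the one-line remark that subcomplex inclusions are cofibrations, so the pushout agrees with the homotopy pushout.
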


  
  \section{A contractible cube complex} 
  \label{sec:cubecx}
  The strategy for proving Theorem \ref{thm:finprop} is well-known,
  and is similar to that used in \cite{BFM+16,GLU20,ABF+21}, for
  instance. Namely, we will construct a 
  contractible complex $\mathfrak X$ on which $\mathcal B_n$ acts in such way that we can apply Brown's \cite{Br87} criterion described in Section
  \ref{sec:brown} above.

  We now proceed to do this. The definitions of the main objects are the word-for-word adaptations of the ones in \cite{ABF+21}, and the only differences with our situation will occur when we analyze the connectivity properties of
  descending links. For this reason, we will briefly recall the
  constructions from \cite[Section 5]{ABF+21}, referring to that article
  for a more thorough presentation.

  Consider ordered pairs $(Z,f)$, where $Z \subset \Sigma_n$ is a
  suited subsurface and $f \in \mathcal B_n$. Two such pairs
  $(Z_1,f_1)$ and $(Z_2,f_2)$ are said to be equivalent if $f_2^{-1}
  \circ f_1(Z_1) = Z_2$ and $f_2^{-1} \circ f_1$ is rigid in the
  complement of $Z_1$. Intuitively, the equivalence class of $(Z,f)$
  records the ``non-rigid" behavior of $f$ outside $Z$. For example, if
  $f \in \B_n$ is the identity outside a suited subsurface $Z$, then
  $(Z,f)$ is equivalent to $(Z,\id)$. As another useful example to keep
  in mind, observe that $(Z,f_1)$ and $(Z,f_2)$ are equivalent if
  $f_2^{-1} \circ f_1$ leaves $Z$ invariant and is rigid outside $Z$.

  We denote the equivalence class of $(Z,f)$ by $[Z,f]$, and the set
  of all equivalence classes by $\mathcal S$. The group $\B_n$ acts on
  $\mathcal S$, by setting
  \[
    g \cdot [Z,f] = [Z,g \circ f].
  \]

  We define the {\em complexity} of a pair $(Z,f)$ as above to
  be the genus of $Z$. Alternatively, observe that since $Z$ is a
  suited subsurface, it is the union of $O_n$ and some number of pieces.
  As each piece contributes $1$ to the genus, the complexity is
  simply the number of pieces in $Z$. Clearly, equivalent pairs have
  equal complexity, and the action preserves complexity, so we have a $\mathcal B_n$--invariant {\em complexity function}
  \[
    h \colon \mathcal S \to \mathbb Z \subset \mathbb R.
  \]

 Given vertices $x_1, x_2 \in \mathcal S$, we say that $x_1 \prec x_2$ if there are representatives $(Z_i,f_i)$ of $x_i$, for $i=1,2$, so that $f_1= f_2$, $Z_1 \subset Z_2$, and $\overline{Z_2 \setminus Z_1}$ is a disjoint union of pieces. We stress that $\prec$ is not a partial order.

 The relation $\prec$ can be used to construct a cube complex $\mathfrak X$ for which $\mathcal S$ is the $0$--skeleton. Given $x_1 \prec x_2$, the set $\{x \mid x_1 \preceq x \preceq x_2\}$ are the vertices of a $d$--cube,  with $d=h(x_2) - h(x_1)$.  We call $x_1$ the {\em bottom} of the cube, as it uniquely minimizes complexity over all its vertices. Since $\Sigma_n$ has exactly $n$ ends, the complex $\mathfrak X$ is $n$-dimensional. 
 
Observe that the action $\mathcal B_n$ on $\mathcal S$ preserves the cubical structure, and that the complexity function $h$ extends linearly over cubes to a $\B_n$-invariant complexity function (of the same name)
\[
h:\mathfrak X \to \mathbb R_+.
\]


For $k\ge 1$, write $\mathfrak X^{\le k}$ for the subcomplex of $\mathfrak X$ spanned by those vertices with complexity $\le k$. A direct translation of the arguments of Proposition 5.7 and Lemmas 6.2 and 6.3 of \cite{ABF+21} yields the following: 

\begin{theorem}\label{thm:brown-preparation}
The cube complex $\mathfrak X$ is contractible, and the action of $\mathcal B_n$ on $\mathfrak X$ satisfies: 
\begin{itemize}
    \item Let $C$ be a cube with bottom vertex $x=[Z,f]$. Then the $\mathcal B_n$-stabilizer of $C$ is isomorphic to a finite extension of $\Map(Z)$. In particular, every cube stabilizer is of type $F_\infty$.  
    \item For every $k\ge 1$, the quotient of $\mathfrak X^{\le k}$ by $\mathcal B_n$ is compact. 
\end{itemize}
\end{theorem}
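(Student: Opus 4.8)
The plan is to transport the three arguments of \cite[Proposition~5.7, Lemmas~6.2 and~6.3]{ABF+21} to our setting; the only genuinely new ingredient is the bookkeeping forced by $\Sigma_n$ having exactly $n$ ends, so that its pieces assemble into $n$ disjoint rays rather than a branching tree. I would first record the resulting combinatorics: a suited subsurface $Z$ consists of $O_n$ together with an initial segment of each ray, hence is determined by a tuple $(k_1,\dots,k_n)\in\mathbb Z_{\ge 0}^n$ with $h(Z)=k_1+\cdots+k_n$, and every element of $\mathcal B_n$ permutes the $n$ ends while acting rigidly far out. All three statements are then read off from this picture.

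For contractibility I would follow the Brown--Stein strategy. The key point is that $(\mathcal S,\preceq)$ is \emph{directed}: given classes $x_1=[Z_1,f_1],\dots,x_m=[Z_m,f_m]$, put $\psi_i=f_1^{-1}f_i$ with defining surface $D_i$, and choose a suited surface $W$ containing $Z_1$ together with all $\psi_i(D_i)$ and $\psi_i(Z_i)$; then $W_i:=\psi_i^{-1}(W)\supseteq Z_i$ is suited, $\psi_i$ is rigid off $W_i$ (as $W_i\supseteq D_i$), and $[W_i,f_i]=[W,f_1]=:y$ dominates every $x_i$. Directedness makes the geometric realization of this poset contractible, and Stein's discrete Morse argument---the discarded, non-cube simplices have contractible descending links---identifies that realization with $\mathfrak X$ up to homotopy. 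The one technical point, handled exactly as in \cite{ABF+21}, is that $\prec$ is not a partial order, so the poset machinery is applied to the representatives $(Z,f)$ (equivalently, to the nerve of the cover of $\mathfrak X$ by closed stars of cube bottoms).

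For the cube stabilizers I would compute directly. Conjugation by $f$ identifies the stabilizer of the bottom vertex $x=[Z,f]$ with $G_Z=\{\phi\in\mathcal B_n\colon \phi(Z)=Z,\ \phi \text{ rigid off } Z\}$. Recording the induced permutation of the $n$ ends gives a homomorphism $G_Z\to\mathrm{Sym}(n)$; its kernel consists of those $\phi$ that fix every end and are rigid off $Z$, which forces $\phi=\id$ outside $Z$, so the kernel is exactly $\Map(Z)$ and the image is finite. Since the bottom and top of a cube are its unique complexity extrema, any element stabilizing a cube $C$ fixes them, whence $\mathrm{Stab}(C)$ is the finite-index subgroup of $G_Z$ preserving the set of ends along which $C$ expands; it too is a finite extension of $\Map(Z)$. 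As $Z$ has finite type, $\Map(Z)$ is of type \FType[\infty] by \cite{Harv}, and finite extensions inherit \FType[\infty].

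Cocompactness is where our setting is strictly simpler than \cite{ABF+21}: acting by $f^{-1}$ shows every cube is $\mathcal B_n$--equivalent to one with bottom $[Z,\id]$, and such a cube is determined by the tuple recording $Z$ together with the subset of ends along which it expands. A cube lies in $\mathfrak X^{\le k}$ only if its top has complexity $\le k$, forcing $\sum_i k_i\le k$; since the rays do not branch there are only finitely many such tuples, hence finitely many orbits of cubes, and $\mathfrak X^{\le k}/\mathcal B_n$ is a finite complex. I expect contractibility to be the main obstacle: the stabilizer and cocompactness statements reduce cleanly to the ray combinatorics of $\Sigma_n$, whereas transporting the Brown--Stein argument requires both the directedness verification above and the careful treatment of the failure of $\prec$ to be a partial order, precisely as in \cite{ABF+21}.
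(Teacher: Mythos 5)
Your proposal is correct and takes essentially the same approach as the paper: the paper's entire proof is the observation that the arguments of Proposition~5.7 and Lemmas~6.2 and~6.3 of \cite{ABF+21} translate directly to this setting, and your argument is exactly that translation carried out explicitly (directedness plus the Stein argument for contractibility, the end-permutation homomorphism $G_Z \to \Sym(n)$ with kernel $\Map(Z)$ for stabilizers, and the tuple-counting of suited subsurfaces for cocompactness). The simplification you note---that suited subsurfaces are indexed by tuples $(k_1,\dots,k_n)$ because the pieces form $n$ non-branching rays---is accurate and is precisely why the translation is ``direct.''
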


In light of the theorem above, in order to apply Brown's Theorem \ref{thm:brown}, we need to prove that descending links have the correct connectivity properties. As was the case in \cite{ABF+21}, the connectivity properties of descending links are determined by those of {\em piece complexes}, whose definition we now recall:

 \begin{definition}[Piece complex]
    Let $\CmpSurface$ be a compact surface with boundary, and let 
    $\BdCmps$ be a collection of boundary circles. We define the
    \notion{piece complex} $\PieceCxOf{\CmpSurface,\BdCmps}$ to be 
    the simplicial subcomplex of the curve complex of $\CmpSurface$ 
    whose vertices are separating curves which, together with a 
    boundary circle from $\BdCmps$, bound a genus
    $1$ subsurface. If $Q = \partial Z$, we will write $\PieceCxOf{\CmpSurface,\BdCmps}=\PieceCxOf{\CmpSurface}$.
  \end{definition}

The relation between the two complexes is encapsulated by the following result, whose proof is exactly the same as that of \cite[Proposition 6.6
]{ABF+21}.

\begin{proposition}\label{prop:descending link
  to piece complex}
Let $x=[Z,\id]$ be a vertex of $\mathfrak X$. Then, the descending link $\lk^\downarrow(x)$ is a complete join over the piece complex $\mathcal P(Z)$.
\end{proposition}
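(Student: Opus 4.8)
The plan is to produce an explicit simplicial projection $\pi\colon \lk^\downarrow(x)\to\mathcal P(Z)$ and to check the two axioms of Definition~\ref{defn-join}, using the representative $(Z,\id)$ for $x$. Since the Morse function is the complexity $h$ and we only consider cubes having $x$ as their \emph{top} vertex, a vertex of $\lk^\downarrow(x)$ is an immediate predecessor $x'\prec x$ with $h(x')=h(x)-1$, while a $k$--simplex encodes a $(k+1)$--cube with top $x$, i.e.\ the simultaneous stripping off of $k+1$ pieces. Writing such a vertex as $x'=[Z',f]$, where $f$ is rigid outside a suited subsurface and carries an outermost model piece $B$ of it onto a genus--$1$ subsurface of $Z$, the curve $c:=f(\partial^+B)$ is separating in $Z$ and, together with a boundary component of $Z$, bounds a genus--$1$ subsurface; hence $c$ is a vertex of $\mathcal P(Z)$. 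I would set $\pi(x'):=c$ and extend over simplices, first checking that $\pi$ is well defined on equivalence classes (independently of the representative and of the order in which pieces are stripped off) and simplicial, the latter because disjoint removed pieces give disjoint curves of $\mathcal P(Z)$.

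For the first axiom of Definition~\ref{defn-join} I would verify surjectivity and injectivity on simplices. Given $c\in\mathcal P(Z)$ bounding a genus--$1$ subsurface $Y_c$, a homeomorphism supported near $Y_c$ that rigidifies it into a standard outermost piece produces a descending vertex over $c$, giving surjectivity; injectivity on each simplex is immediate, as the removed pieces, and hence the cube, are recovered from the subsurfaces cut off by its curves. The real content is the second axiom, namely that
\[
\pi^{-1}(\langle c_0,\dots,c_k\rangle)=\pi^{-1}(c_0)\join\cdots\join\pi^{-1}(c_k).
\]
Here one uses that disjoint curves of $\mathcal P(Z)$ bound \emph{pairwise disjoint} genus--$1$ subsurfaces (disjoint curves of $\mathcal P(Z)$ are never nested, as one checks from the genus--$1$ condition), so that the rigidifying data at the different pieces live in disjoint subsurfaces; consequently any tuple of choices, one from each fibre $\pi^{-1}(c_i)$, assembles into an honest $(k+1)$--cube with top $x$, which is exactly the join condition.

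Finally I would pin down the fibres, which carry the whole point of the statement. Tracking the equivalence relation on pairs $(Z,f)$ shows that two descending vertices over the same $c$ coincide precisely when their restrictions to the removed piece differ by a rigid (standard) homeomorphism; hence $\pi^{-1}(c)$ is in natural bijection with the mapping class group of a single piece relative to its boundary. As the torus with two boundary components has infinite such mapping class group, every fibre is an infinite discrete set, and in particular has at least two points, which is what later feeds Observation~\ref{obs:join-not-contractible}. I expect the main obstacle to be exactly this bookkeeping: checking that $\pi$ is independent of all choices, and that the twisting data at distinct pieces are genuinely independent so that the join decomposition holds. All of these are local assertions about disjoint genus--$1$ subsurfaces, and reduce, piece by piece, to the verifications of \cite[Proposition~6.6]{ABF+21}, with the torus with two boundary components serving as the model piece; no new phenomenon arises beyond this substitution.
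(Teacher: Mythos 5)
Your projection $\pi$ coincides with the map the paper works with (denoted $\eta$ there, defined by $\eta([W,g])=g(Y)$), and your overall plan---verify the two axioms of Definition~\ref{defn-join} for this projection and reduce the verifications to \cite[Proposition~6.6]{ABF+21}---is exactly the paper's proof, which consists of the single remark that the argument of \cite[Proposition~6.6]{ABF+21} applies verbatim with the two-holed torus as model piece. There is, however, a genuine flaw in the mechanisms you offer for the verifications: the claim that surjectivity is witnessed by ``a homeomorphism supported near $Y_c$'', and the closing assertion that everything is a \emph{local} statement about disjoint genus-$1$ subsurfaces.

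Here is a concrete failure. Suppose all pieces of $Z$ lie in the tower attached to $b_1$, with complexity $k\ge 2$, and let $c\in\mathcal P(Z)$ cut off a genus-one subsurface $Y_c$ containing the boundary circle $b_2$ of $Z$ (such $c$ exist, since the complement of $Y_c$ then has positive genus). A vertex in the fiber over $c$ is a class $[W,g]$ where $g$ carries a piece $B$ of the rigid structure onto a subsurface isotopic to $Y_c$, carries the suited subsurface $W\cup B$ onto $Z$, and is rigid outside $W\cup B$. Rigidity forces $g$ to map the infinite towers attached to $\partial(W\cup B)$ rigidly onto the towers attached to $\partial Z$; moreover $g(\partial^-B)$ is an honest boundary circle of $Z$ contained in a subsurface isotopic to $Y_c$, hence equals $b_2$, so the (truncated) tower behind $B$ must map rigidly onto the \emph{full} tower at $b_2$. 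Matching up the remaining towers, every such $g$ either permutes two ends of $\Sigma_n$ or shifts an entire tower by at least one piece, as a handle shift does; in no case is $g$ compactly supported, let alone supported near $Y_c$. The correct construction---the one in \cite{ABF+21}---takes a change-of-coordinates homeomorphism between suited subsurfaces, possibly permuting their boundary circles, and extends it over the complement by a rigid bijection of the towers. The same global phenomenon touches your join axiom: in the example above, any cube projecting onto a simplex containing $c$ has underlying suited surface $g^{-1}(Z)\neq Z$, so a tuple of fiber choices is not assembled by gluing data supported in the disjoint $Y_{c_i}$, but again by a global change of coordinates. Finally, a minor inaccuracy that does not affect this proposition but matters for Lemma~\ref{lem:fibers infinite}: the fiber over $c$ is not the mapping class group of a piece rel boundary, but its quotient by the central Dehn twist about the top boundary curve of the piece (that twist is absorbed into $W$); it is still infinite, so the appeal to Observation~\ref{obs:join-not-contractible} is unaffected.
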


  An immediate consequence of Propositions~\ref{prop:descending link
  to piece complex} and \ref{prop-cjoin-conn} is the following.
  \begin{corollary}\label{cor:piece-to-desc-link}
    If $\PieceCxOf{\CmpSurface}$ is weakly Cohen--Macaulay of dimension
    $\TargetConn$, then so is the descending link $\DescLinkOf{\CmpSurface}$.
  \end{corollary}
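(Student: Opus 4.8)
The plan is to derive the corollary directly from the two cited propositions, since it is a formal consequence of their combination. First I would unwind the notation: by definition the descending link $\DescLinkOf{\CmpSurface}$ is the descending link $\lk^\downarrow(x)$ of the vertex $x = [\CmpSurface,\id]$ of $\mathfrak X$ (here $\CmpSurface = Z$ plays the role of the suited subsurface). With this identification in place, Proposition~\ref{prop:descending link to piece complex} applies and tells us that $\DescLinkOf{\CmpSurface}$ is a complete join over the piece complex $\PieceCxOf{\CmpSurface}$, via the associated simplicial projection.

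Next I would feed this complete join into Proposition~\ref{prop-cjoin-conn}. By hypothesis, the base $\PieceCxOf{\CmpSurface}$ is weakly Cohen--Macaulay of dimension $\TargetConn$. Proposition~\ref{prop-cjoin-conn} asserts that whenever a complete join has base weakly Cohen--Macaulay of dimension $m+1$, the total complex is weakly Cohen--Macaulay of the same dimension $m+1$. Applying this with $m = \TargetConn - 1$, so that $m+1 = \TargetConn$, I conclude that $\DescLinkOf{\CmpSurface}$ is weakly Cohen--Macaulay of dimension $\TargetConn$, which is exactly the assertion of the corollary.

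The only point requiring any care is the bookkeeping: matching the dimension index in Proposition~\ref{prop-cjoin-conn} — which is stated with the explicit shift ``$\TargetConn + 1$'' on both sides — to the uniform dimension $\TargetConn$ appearing in the corollary, and verifying that the complete-join structure produced by Proposition~\ref{prop:descending link to piece complex} is precisely the hypothesis Proposition~\ref{prop-cjoin-conn} consumes. There is no genuine obstacle here, since weak Cohen--Macaulayness is exactly the property transported along a complete join by Proposition~\ref{prop-cjoin-conn}; the argument is therefore immediate once the notational identification of $\DescLinkOf{\CmpSurface}$ with $\lk^\downarrow([\CmpSurface,\id])$ is made explicit.
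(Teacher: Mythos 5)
Your proposal is correct and coincides with the paper's own (implicit) proof: the paper presents this corollary as an immediate consequence of Proposition~\ref{prop:descending link to piece complex} (the descending link of $x=[\CmpSurface,\id]$ is a complete join over $\PieceCxOf{\CmpSurface}$) combined with Proposition~\ref{prop-cjoin-conn} (complete joins inherit weak Cohen--Macaulayness from their base), which is exactly your argument, including the harmless index bookkeeping and the identification of $\DescLinkOf{\CmpSurface}$ with $\lk^\downarrow([\CmpSurface,\id])$.
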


Before we end this section, we will need a little more information about this complete join for the proof of the negative part of Theorem~\ref{thm:finprop}.  To explain, we first recall that for $x = [Z,\id]$, the complete join map $\eta \colon \lk^\downarrow(x) \to \mathcal P(Z)$ is defined as follows.  Given $[W,g] \in \lk^\downarrow(x)$, there is a piece $Y \subset \Sigma_n$ so that $W \cup Y$ is a suited subsurface and $[W \cup Y,g] = x = [Z,\id]$.  It follows that $g(W \cup Y) = Z$ and $g(Y)$ is thus a vertex of $\mathcal P(Z)$.  We then define $\eta([W,g]) = g(Y)$.  With this, we now state the lemma we will need.

\begin{lemma} \label{lem:fibers infinite}
    For every vertex $X \in \mathcal P(Z)$, the fiber in $\lk^\downarrow(x)$ is infinite.
\end{lemma}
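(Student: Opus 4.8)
The plan is to produce, from any single preimage of $X$, an infinite family of distinct vertices of $\lk^\downarrow(x)$ all mapping to $X$, obtained by twisting inside the one piece that gets coned off. Since $\eta$ is a complete join map it is surjective, so I fix a vertex $[W,g]\in\lk^\downarrow(x)$ with $\eta([W,g])=X$. By the construction of $\eta$ there is a piece $Y\subset\Sigma_n$ with $W\cup Y$ suited, $[W\cup Y,g]=x$, and $g(Y)=X$; here $Y$ meets $W$ only along its top boundary $\partial^+Y$, so $Y\subset\overline{\Sigma_n\setminus W}$, and $X=g(\partial^+Y)$.

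Next I would choose an essential, non-peripheral simple closed curve $c$ in the interior of $Y$ --- such a curve exists since $Y$ is a genus-one surface with two boundary circles (take $c$ non-separating) --- and let $T_c$ be the corresponding Dehn twist, which has infinite order. Because $T_c$ is supported in the compact subsurface $Y$, it lies in $\Map_c(\Sigma_n)\subset\mathcal B_n$, so $g_k:=g\circ T_c^{\,k}\in\mathcal B_n$ for every $k\in\mathbb Z$. I then check that each $[W,g_k]$ is a vertex of $\lk^\downarrow(x)$ lying in the fiber over $X$. As $T_c^{\,k}$ is supported in $Y\subset W\cup Y$, it fixes $W\cup Y$ setwise and is the identity outside it, so $g_k(W\cup Y)=g(W\cup Y)=Z$ and $g_k$ agrees with the rigid map $g$ off $W\cup Y$; hence $[W\cup Y,g_k]=x$ and $[W,g_k]\prec x$, exhibiting $[W,g_k]$ as a descending-link vertex. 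Moreover $T_c^{\,k}(Y)=Y$, so $\eta([W,g_k])=g_k(Y)=g(Y)=X$.

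Finally I would verify that the $[W,g_k]$ are pairwise distinct, which is the only delicate point and the step I expect to be the main obstacle, as it is where the equivalence relation defining $\mathcal S$ must be handled with care. For $k\ne l$ one computes $g_l^{-1}g_k=T_c^{\,k-l}$. This map fixes $W$ pointwise, so the first condition $g_l^{-1}g_k(W)=W$ for the equivalence of $(W,g_k)$ and $(W,g_l)$ holds; but $g_l^{-1}g_k$ fails to be rigid in the complement of $W$, since it preserves the piece $Y$ while restricting there to $T_c^{\,k-l}\ne\id$, which is not the canonical rigid identification of $Y$ with itself. Thus $(W,g_k)\not\sim(W,g_l)$, hence $[W,g_k]\ne[W,g_l]$, and $\{[W,g_k]\}_{k\in\mathbb Z}$ is an infinite family in the fiber over $X$. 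The crux is exactly this non-rigidity check: the twist is invisible on $W$ and on $\partial^+Y$, yet it genuinely changes the class because it destroys rigidity on the coned-off piece.
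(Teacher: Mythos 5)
Your first two steps are fine and agree with the paper's construction: your family $g_k=g\circ T_c^{\,k}$ is exactly the special case $h=T_c^{\,k}$ of the paper's proof, which post-composes $g$ with an arbitrary homeomorphism $h$ supported in the piece $Y$. The genuine gap is in the distinctness step, and it is precisely the point you flagged as delicate. Since the pairs $(W,f)$ have $f\in\mathcal B_n$ a \emph{mapping class}, the condition ``$g_l^{-1}g_k$ is rigid in the complement of $W$'' must be read up to isotopy: $(W,g_k)\sim(W,g_l)$ if and only if the class of $T_c^{\,k-l}$ admits \emph{some} representative carrying $W$ to $W$ and rigid outside $W$. You only check that the obvious representative $T_c^{\,k-l}$ itself fails to be rigid, and the criterion you use --- ``its restriction to $Y$ is not the canonical identification, hence the pairs are inequivalent'' --- is false in general. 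Counterexample: let $h$ be the Dehn twist about $\partial^+Y$, the boundary circle $Y$ shares with $W$. This $h$ is supported in $Y$ and is nontrivial in $\Map(Y)$ (boundary twists are central of infinite order there), so your criterion would declare $(W,g)\not\sim(W,g\circ h)$; but $h$ is isotopic in $\Sigma_n$ to a twist about a curve pushed into the interior of $W$, hence to a homeomorphism supported in $W$, which is rigid outside $W$ --- so the two pairs \emph{are} equivalent. This absorption phenomenon is exactly what the paper's proof must work around when it says that $h$ is detected only ``modulo Dehn twisting in the essential component of $\partial Y$ in $Z$ (which can be absorbed into $W$)''.

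Your conclusion is nevertheless true for your particular choice of $c$ (non-separating, in the interior of $Y$), and the gap can be closed in a few lines. Suppose $T_c^{\,m}$, $m\neq 0$, were isotopic to a homeomorphism $r$ with $r(W)=W$ that is rigid outside $W$. Since $T_c^{\,m}$ fixes the isotopy class of every suited curve, $r$ must carry each piece outside $W$ to itself and hence restrict there to the canonical identification; so $r$ is supported in $W$. But a mapping class supported in $W$ fixes the isotopy class of every curve disjoint from $W$, whereas for a curve $d\subset Y$ dual to $c$ one has $i(T_c^{\,m}(d),d)\geq |m|\,i(c,d)^2 - i(d,d) = |m| > 0$, so $T_c^{\,m}(d)\neq d$; contradiction. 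Equivalently, in the paper's language: a nonzero power of a twist about a non-separating curve in $Y$ is nontrivial in $\Map(Y)$ modulo the central subgroup generated by the twist about the component of $\partial Y$ adjacent to $W$, which is what distinctness actually requires. With this repair your argument is a correct, slightly more explicit ($\mathbb Z$-parametrized) version of the paper's proof; the paper instead uses all of $\Map(Y)$ and concludes from the infiniteness of $\Map(Y)$ modulo that boundary twist.
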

\begin{proof}  Given $X = \eta([W,g]) = g(Y)$ as above, we need to show that $\eta^{-1}(X)$ is infinite.  For this, we let $h \colon Y \to Y$ be any homeomorphism representing an element of $\Map(Y)$.  We extend $h$ by the identity outside $Y$ to a homeomorphism of the same name, which thus represents an element of $\mathcal B_n$.  Observe that $g \circ h (W \cup Y) = g(W \cup Y) = Z$ and $g \circ h(Y) = g(Y) \subset Z$, while $g \circ h$ is rigid outside $Z = W \cup Y$, so $[W,g \circ h] = [Z,\id]$, and thus $[W,g\circ h] \in \lk^\downarrow(x)$ with $\eta([W,g \circ h]) = g \circ h(Y) = g(Y) = X$.  That is, $[W,g \circ h] \in \eta^{-1}(X)$ is another vertex.  Moreover, $[W,g \circ h] = [W,g]$ if and only if $g^{-1} \circ g \circ h = h$ is rigid outside $W$ (up to isotopy).  Since $Y$ is a piece outside $W$, this can only happen if $h$ is isotopic (in $\Sigma_n$) to a homeomorphism which restricts to the identity in $Y$.  This is only possibly if the original homeomorphism $h$ of $Y$ represents the identity in $\Map(Y)$, modulo Dehn twisting in the essential component of $\partial Y$ in $Z$ (which can be ``absorbed" into $W$).  Since $\Map(Y)$ modulo the (central) subgroup generated by Dehn twisting in this component of $\partial Y$ is infinite, it follows that $\eta^{-1}(X)$ is infinite, as required.
\end{proof}

  \section{Connectivity properties of piece complexes}
  \label{sec:connectivity-of-the-piece-cx}
  \label{sec:firstpart}
  In this section, we shall establish connectivity properties
  of piece complexes and finally deduce the finiteness properties
  of $\OurGroup[\NumStrands]$.
  
  \begin{theorem}\label{thm:conn-of-piece-cx}
    The piece complex $\PieceCxOf{\CmpSurface,\BdCmps}$ of a compact
    surface is $\TargetConn$-connected, provided that
    $\GenusOf{\CmpSurface}\geq2\TargetConn+3$ and
    $\CardOf{\BdCmps}\geq\TargetConn+2$.
  \end{theorem}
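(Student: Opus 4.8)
The plan is to argue by induction on the connectivity parameter $k$, proving the statement simultaneously for all compact surfaces $Z$ and all boundary collections $Q$ with $\operatorname{genus}(Z)\ge 2k+3$ and $|Q|\ge k+2$. The base case $k=-1$ asks only that $\mathcal P(Z,Q)$ be nonempty, which is immediate since $\operatorname{genus}(Z)\ge 1$ and $|Q|\ge 1$ already furnish one admissible piece. The engine of the induction is the observation that links of simplices are again piece complexes of smaller surfaces: if $\sigma=\{c_0,\dots,c_j\}$ is a $j$-simplex, with $c_i$ bounding the genus-$1$ piece $P_i$ at the circle $b_i\in Q$, then (using the easily checked fact that distinct pieces of a simplex use distinct circles of $Q$) cutting along $\sigma$ identifies $\Link(\sigma)\cong\mathcal P(Z_\sigma,Q_\sigma)$, where $Z_\sigma=\overline{Z\setminus(P_0\cup\cdots\cup P_j)}$ has genus $\operatorname{genus}(Z)-(j+1)$ and $Q_\sigma=Q\setminus\{b_0,\dots,b_j\}$ has $|Q|-(j+1)$ circles. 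Checking the inequalities, these parameters still satisfy the hypotheses for connectivity $k-j-1$ — the genus bound with room to spare, the bound on $|Q|$ exactly tight — so by the inductive hypothesis every link of a $j$-simplex is $(k-j-1)$-connected. Thus, once $k$-connectivity of $\mathcal P(Z,Q)$ itself is established, the complex is weakly Cohen--Macaulay, as required to feed Corollary~\ref{cor:piece-to-desc-link}.

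The remaining and main task is to upgrade this control of links to $k$-connectivity of the whole complex. To kill a class in $\pi_p(\mathcal P(Z,Q))$ for $p\le k$, I would represent it by a simplicial map $f\colon S^p\to\mathcal P(Z,Q)$ from a triangulated sphere, fix once and for all a reference circle $b_0\in Q$ together with a fixed piece $P_0$ bounded by a curve $c_0$, and homotope $f$ until every piece appearing in its image is disjoint from $P_0$; the image then lies in the closed star $\operatorname{St}(P_0)=P_0\ast\Link(P_0)$, which is a cone and hence contractible, so $f$ is null-homotopic. To move the pieces off $P_0$, I would put the finitely many curves occurring as vertices of $\operatorname{Im}(f)$ in minimal position with $c_0$ and remove the intersections by surgery along innermost arcs of $c_0$, pushing each surgered piece into the complementary surface $Z\setminus P_0$ (where, by the argument above, any piece disjoint from $c_0$ that uses a circle other than $b_0$ must already live). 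Each elementary surgery is realised as a homotopy of $f$ supported on the stars of the simplices meeting $c_0$, the fillings being supplied precisely by the high connectivity of the relevant links established in the first step; this is the formal point at which the bad-simplex machinery of Proposition~\ref{prop :connectivity of the goodlink} enters.

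The main obstacle is the surgery bookkeeping. One must verify that surgering a separating piece-curve $c$ along $c_0$ again yields (after discarding any inessential component) a curve bounding a genus-$1$ piece at some circle of $Q$, that pieces which were disjoint remain disjoint after the simultaneous surgery, and that a suitable complexity — the total geometric intersection number with $c_0$ — strictly decreases, guaranteeing termination. It is exactly here that the hypothesis $\operatorname{genus}(Z)\ge 2k+3$, rather than a naive $\operatorname{genus}(Z)\ge k+\mathrm{const}$, is consumed: a surgery may temporarily spend genus in the complement, and the factor of two provides the room to keep the surgered pieces admissible throughout the flow, mirroring the slack already visible in the link computation, where cutting lowers the genus by $1$ but also $k$ by $1$. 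I expect this admissibility-and-termination lemma for piece surgeries to be the technical heart; the rest is the formal induction and the cone-contractibility conclusion. As a fallback that sidesteps some of the bookkeeping, one could instead realise $\mathcal P(Z,Q)$ as the good subcomplex of a larger, manifestly highly connected arc-or-curve complex and apply Proposition~\ref{prop :connectivity of the goodlink} directly, at the cost of identifying that ambient complex and computing its good links.
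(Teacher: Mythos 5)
Your first paragraph is fine: the identification of links of simplices with piece complexes of cut surfaces, and the numerology (genus drops with room to spare, $\CardOf{\BdCmps}$ exactly tight), match what the paper does in Corollary~\ref{cor:piece-cx-is-wcm}. The gap is in your main step. Pushing a sphere off a fixed piece $P_0$ by innermost-arc surgery along $c_0$ is precisely the move that fails for complexes of \emph{separating} curves, and it is exactly the step you defer as ``expected to work''. Concretely: suppose $c$ bounds a genus-$1$ piece $P$ at $b\in\BdCmps$ and $\delta\subset c_0$ is an innermost arc with endpoints on $c$. Surgery along $\delta$ replaces $c$ by two curves $c',c''$. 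If $\delta$ lies in $P$ and is essential there (it runs over the handle), then both $c'$ and $c''$ are \emph{non-separating}; neither is a vertex of $\PieceCxOf{\CmpSurface,\BdCmps}$, and there is no ``inessential component to discard''. Even when both surgered curves are separating, neither need cut off a genus-$1$ subsurface containing a circle of $\BdCmps$. So there is no surgery flow that stays in the complex while decreasing intersection number, and no amount of genus hypothesis repairs this --- the obstruction is topological, not numerical. A second, related misfire: Proposition~\ref{prop :connectivity of the goodlink} transfers connectivity from an ambient complex to its good subcomplex given control of good links; it is not a device for supplying fillings inside a surgery homotopy, so it cannot play the role you assign it.

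This failure of naive surgery on separating curves is exactly why the paper routes around the piece complex using Hatcher--Vogtmann's tether machinery \cite{HV17} and \cite{ABF+21}. The actual proof has three moves: (1) the connectivity of the tethered handle complex $\THcxOf{\CmpSurface,\BdCmps}$ is quoted from \cite{ABF+21} (Lemma~\ref{lem:TH conn}) --- there surgery arguments work because the movable objects are handles and arcs, not separating curves; (2) the bad simplex argument, with $\RottenCoreOf{\TheSimplex}$ the set of handles sharing a tether target, transfers this connectivity to the injective tethered handle complex $\THinjOf{\CmpSurface,\BdCmps}$ by induction on $\TargetConn$ (Theorem~\ref{thm:inj_tether_cx_conn}) --- this is where Proposition~\ref{prop :connectivity of the goodlink} genuinely enters; and (3) the projection $\THinjOf{\CmpSurface,\BdCmps}\to\PieceCxOf{\CmpSurface,\BdCmps}$ is a complete join (Proposition~\ref{prop:inj-handle-cx-is-complete-join}), so connectivity pushes forward to the piece complex by Proposition~\ref{prop:join-push-forward}. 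Your fallback suggestion is in the right spirit but incomplete as stated: the piece complex is not itself the good subcomplex of any manifestly highly connected complex in this setup; one needs both the bad-simplex transfer \emph{and} the complete-join retraction, with the injective tethered handle complex as the intermediary.
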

  Let us first observe that this implies that piece complexes are Cohen--Macaulay.
  \begin{corollary}\label{cor:piece-cx-is-wcm}
    The piece complex $\PieceCxOf{\CmpSurface,\BdCmps}$ is weakly
    Cohen--Macaulay of dimension $\TargetConn+1$, provided that
    $\GenusOf{\CmpSurface}\geq2\TargetConn+3$ and
    $\CardOf{\BdCmps}\geq\TargetConn+2$.
  \end{corollary}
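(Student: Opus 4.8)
The plan is to deduce the corollary directly from Theorem~\ref{thm:conn-of-piece-cx}, handling separately the two conditions in the definition of weak Cohen--Macaulayness. The required $\TargetConn$-connectivity of $\PieceCxOf{\CmpSurface,\BdCmps}$ is exactly the conclusion of Theorem~\ref{thm:conn-of-piece-cx} under the stated hypotheses $\GenusOf{\CmpSurface}\ge 2\TargetConn+3$ and $\CardOf{\BdCmps}\ge\TargetConn+2$, so that part is immediate. All of the work therefore lies in the condition on links, and the guiding principle is that the link of a simplex in a piece complex is itself a piece complex, namely that of the surface obtained by cutting off the genus-one pieces determined by the vertices of the simplex.

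To make this precise, I would fix a simplex $\TheSimplex$ of $\PieceCxOf{\CmpSurface,\BdCmps}$ of dimension $\TheDim$, whose vertices are $\TheDim+1$ pairwise disjoint separating curves, each cutting off a genus-one subsurface together with a distinct boundary circle of $\BdCmps$. Let $\CutSurface$ be the result of deleting the interiors of these $\TheDim+1$ genus-one pieces from $\CmpSurface$, and let $\StillAdmissible\subset\BdCmps$ be the set of the $\CardOf{\BdCmps}-\TheDim-1$ boundary circles that were not used (these persist as boundary circles of $\CutSurface$). The central claim is the simplicial identification $\Link(\TheSimplex)=\PieceCxOf{\CutSurface,\StillAdmissible}$. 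Granting it, the bookkeeping is clean: cutting off $\TheDim+1$ genus-one pieces gives $\GenusOf{\CutSurface}=\GenusOf{\CmpSurface}-\TheDim-1$, while the admissible boundary circles drop by exactly $\TheDim+1$, so $\CardOf{\StillAdmissible}=\CardOf{\BdCmps}-\TheDim-1$.

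With this identification in hand, the link condition follows by applying Theorem~\ref{thm:conn-of-piece-cx} to $\CutSurface$ with connectivity parameter $\TargetConn-\TheDim-1$, which is precisely the value demanded by the definition since $\DimOf{\TheSimplex}=\TheDim$. The hypotheses of the theorem for this parameter read $\GenusOf{\CutSurface}\ge 2\TargetConn-2\TheDim+1$ and $\CardOf{\StillAdmissible}\ge\TargetConn-\TheDim+1$. Substituting $\GenusOf{\CutSurface}=\GenusOf{\CmpSurface}-\TheDim-1\ge 2\TargetConn-\TheDim+2$ and $\CardOf{\StillAdmissible}=\CardOf{\BdCmps}-\TheDim-1\ge\TargetConn-\TheDim+1$, the genus requirement holds with room to spare (as $2\TargetConn-\TheDim+2\ge 2\TargetConn-2\TheDim+1$) and the boundary requirement holds with equality. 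The low-dimensional degenerate cases, where $\TargetConn-\TheDim-1$ drops to $-1$ or below, are absorbed by the stated connectivity conventions, the value $-1$ amounting to non-emptiness of the link, itself guaranteed by the same genus and boundary bounds.

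The one genuinely topological step—and the place I expect to have to be careful—is the identification $\Link(\TheSimplex)=\PieceCxOf{\CutSurface,\StillAdmissible}$. A vertex of the link is a separating curve disjoint from all curves of $\TheSimplex$ that cuts off a genus-one piece with some circle of $\BdCmps$; the point to rule out is that this circle is one of the $\TheDim+1$ already consumed by $\TheSimplex$. If it were, the new genus-one piece would be nested with the genus-one piece cut off by the corresponding vertex of $\TheSimplex$, and comparing genera forces the region between the two curves to be a genus-zero annulus, making the two curves isotopic—a contradiction. The same nesting analysis shows that the genus-one pieces for link vertices are disjoint from those removed, so such curves genuinely live in $\CutSurface$ and the genus-one condition is the same whether measured in $\CmpSurface$ or in $\CutSurface$; matching of higher simplices is then automatic, since both complexes are the full subcomplexes on piece-curves inside the respective curve complexes, with the same disjointness relation. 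Finally one notes that $\CutSurface$ is connected, each removed piece being attached along a single separating curve.
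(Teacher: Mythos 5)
Your proposal is correct and takes essentially the same route as the paper: identify the link of a $\TheDim$-simplex with the piece complex $\PieceCxOf{\CutSurface,\StillAdmissible}$ of the surface obtained by cutting off the corresponding genus-one pieces, check that $\GenusOf{\CutSurface}$ and $\CardOf{\StillAdmissible}$ satisfy the hypotheses of Theorem~\ref{thm:conn-of-piece-cx} for the parameter $\TargetConn-\TheDim-1$, and apply that theorem. Your nesting argument pinning down the link identification (and the distinctness of the boundary circles consumed by the simplex) is detail the paper leaves to the reader, but the underlying argument is identical.
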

  \begin{proof}
    Observe that the link of a $\TheDim$-simplex
    $\TheSimplex$ in $\PieceCxOf{\CmpSurface,\BdCmps}$ is isomorphic
    to the piece complex $\PieceCxOf{\CutSurface,\StillAdmissible}$
    where $\CutSurface$ is obtained from $\CmpSurface$ by cutting off
    the pieces in $\TheSimplex$ and $\StillAdmissible$ is the set of
    those boundary circles in $\BdCmps$ that still exist 
    in $\CutSurface$.
    Then,
    \[
      \GenusOf{\CutSurface}
      =
      \GenusOf{\CmpSurface}-(\DimOf{\TheSimplex}+1) 
      \geq 2\TargetConn+3-\DimOf{\TheSimplex}-1 
      \geq 2(\TargetConn-\DimOf{\TheSimplex}-1)+3
    \]
    and
    \(
      \CardOf{\StillAdmissible}
      =
      \CardOf{\BdCmps}-(\DimOf{\TheSimplex}+1)
      \geq (\TargetConn-\DimOf{\TheSimplex}-1) +2
    \).
    This implies the link of $\sigma$ is $(\TargetConn-\DimOf{\TheSimplex}-1)$--connected, in view
    of Theorem~\ref{thm:conn-of-piece-cx}, as required.
  \end{proof}
  
  To analyze the connectivity properties of piece complexes, we shall
  introduce two more complexes: the \notion{injective tethered handle
  complex} $\THinjOf{\CmpSurface,\BdCmps}$, which we will see is a complete
  join over $\PieceCxOf{\CmpSurface, \BdCmps}$; and the \notion{tethered
  handle complex} $\THcxOf{\CmpSurface,\BdCmps}$, which contains
  the injective tethered handle complex as a subcomplex. As before, if
  $\BdCmps=\BoundaryOf{\CmpSurface}$ we will simply write 
  $\THinjOf{\CmpSurface,\BdCmps}= \THinjOf{\CmpSurface}$ and 
  $\THcxOf{\CmpSurface,\BdCmps}= \THcxOf{\CmpSurface}$. 
  A diagram of the maps we use reads as follows:
  \[
    \begin{tikzcd}[row sep=15]
      \DescLinkOf{[Z,f]}
      \ar[dr]
      &
      &
      \THinjOf{\CmpSurface}
      \ar[dl]
      \ar[r]
      &
      \THcxOf{\CmpSurface}
      \\
      &
      \PieceCxOf{\CmpSurface}
      &
      &
    \end{tikzcd}
  \]
  We have used the left arrow to pull back connectivity
  from the piece complex to the descending link. We shall use
  the middle arrow to push forward connectivity from
  the injective handle complex to the piece complex, and we will
  use the inclusion of $\THinjOf{\CmpSurface}$ into $\THcxOf{\CmpSurface}$ to apply a \emph{bad simplex
  argument}, pulling back connectivity from the tethered handle complex
  to the injective handle complex. The connectivity of the tethered handle
  complex itself has been analyzed in~\cite{ABF+21}.

  \subsection{Tethered handle complexes}
  Let $\TheSurface$ denote a compact connected orientable surface with
  boundary. By a
  \notion{handle} in $\TheSurface$ we mean a subsurface of $\TheSurface$
  that avoids the boundary $\BoundaryOf{\TheSurface}$ and is
  homeomorphic to a one-holed torus. 

  Given a handle  $\TheHandle$, consider (the isotopy class of) a simple arc
  $\TheTether$ that joins $\BoundaryOf{\TheHandle}$ to a
  component $\TheBdCmp \subset \TheSurface$. We remark that the isotopy class of $\TheTether$
  is not taken relative to its endpoints, as is sometimes the case. Observe
  that the regular neighborhood of $\TheHandle\cup\TheTether\cup\TheBdCmp$
  is a piece. We will refer to the union of $\TheHandle$ and $\TheTether$
  as a \notion{tethered handle} tethered to $\TheBdCmp$ with handle
  $\TheHandle$ and \notion{tether} $\TheTether$.

  \begin{definition}[Tethered handle complex]
    Let $\TheSurface$ be a compact orientable connected surface, and let 
    $\BdCmps$ be a collection of boundary circles of $\TheSurface$.
    The \notion{tethered handle complex} $\THcxOf{\TheSurface,\BdCmps}$ 
    is the simplicial complex whose $\TheDim$-simplices are sets of
    $\TheDim+1$ pairwise disjoint tethered handles, each tethered to an
    element of $\BdCmps$.
  \end{definition}

  The following is Lemma~10.12 of \cite{ABF+21}, which builds upon work of Hatcher-Vogtmann \cite{HV17}: 
  \begin{lemma} \label{lem:TH conn}
    The tethered handle complex $\THcxOf{\TheSurface,\BdCmps}$ is
    $\TargetConn$-connected, provided that $\BdCmps$ is not empty and $\GenusOf{\TheSurface}\ge 2\TargetConn+3$.   
  \end{lemma}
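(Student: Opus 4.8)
The plan is to prove the statement directly by induction on $\TargetConn$, following the surgery-and-coning template that Hatcher--Vogtmann \cite{HV17} use for tethered complexes. The base case $\TargetConn=-1$ only asks that $\THcxOf{\TheSurface,\BdCmps}$ be nonempty, which holds as soon as $\GenusOf{\TheSurface}\ge 1$ and $\BdCmps\neq\emptyset$: pick any boundary circle $\TheBdCmp\in\BdCmps$, any embedded one-holed torus $\TheHandle$, and any arc $\TheTether$ joining $\partial\TheHandle$ to $\TheBdCmp$. For the inductive step I would fix, once and for all, one such tethered handle $h_0$ and show that every simplicial map $\varphi\colon S^{\TheDim}\to\THcxOf{\TheSurface,\BdCmps}$ with $\TheDim\le\TargetConn$ can be homotoped so that its image lies in the closed star of $h_0$; since that star is a cone it is contractible, and $\varphi$ is therefore nullhomotopic.

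The engine of the inductive step is the connectivity of links. The link of a simplex $\TheSimplex$ in $\THcxOf{\TheSurface,\BdCmps}$ is again a tethered handle complex $\THcxOf{\CutSurface,\StillAdmissible}$, where $\CutSurface$ is obtained by cutting $\TheSurface$ along the pieces carried by $\TheSimplex$ and $\StillAdmissible$ records the surviving boundary circles together with the newly created separating curves (so $\StillAdmissible$ is again nonempty). Exactly as in the genus count in the proof of Corollary~\ref{cor:piece-cx-is-wcm}, removing $\DimOf{\TheSimplex}+1$ pieces drops the genus by $\DimOf{\TheSimplex}+1$, so that
\[
  \GenusOf{\CutSurface}=\GenusOf{\TheSurface}-(\DimOf{\TheSimplex}+1)\ge 2\big(\TargetConn-\DimOf{\TheSimplex}-1\big)+3,
\]
and the inductive hypothesis makes the link $(\TargetConn-\DimOf{\TheSimplex}-1)$-connected. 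With this in hand, I would run the usual bad-vertex surgery: after putting $\varphi$ in simplicial position, measure each vertex of the domain by the geometric intersection of its tethered handle with $h_0$, and use the link connectivity to alter $\varphi$ on the stars of the vertices of maximal intersection, replacing them by fillings supported on tethered handles that meet $h_0$ strictly less. Iterating pushes the whole sphere off $h_0$ and into $\Link(h_0)$, after which coning on $h_0$ provides the desired nullhomotopy.

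The hard part, and the place where the hypothesis $\GenusOf{\TheSurface}\ge 2\TargetConn+3$ is actually consumed, is the surgery bookkeeping hidden in the phrase ``meet $h_0$ strictly less''. Because a tether $\TheTether$ is taken without fixing its endpoint on $\TheBdCmp$, an outermost-arc surgery that removes an intersection of a tether (or of a handle) with $h_0$ must be checked to again yield an \emph{embedded} tethered handle that is disjoint from the remaining handles of the simplex; realizing these surgeries simultaneously and disjointly is what requires genus roughly $2\TargetConn$ rather than $\TargetConn$, the extra factor being the room needed to reroute tethers past $h_0$ while keeping them embedded and disjoint. Rather than reprove this from scratch, I would invoke the fact that this is precisely Lemma~10.12 of \cite{ABF+21}, whose proof assembles exactly this surgery from the tethered-complex connectivity results of \cite{HV17}; the only role the lemma plays for us is to feed the clean numerical bound $\GenusOf{\TheSurface}\ge 2\TargetConn+3$ into the complete-join and bad-simplex machinery of Section~\ref{sec:connectivity-of-the-piece-cx}.
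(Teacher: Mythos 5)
Your proposal is correct and ends up in exactly the same place as the paper: the paper gives no independent proof of this lemma, but simply quotes it as Lemma~10.12 of \cite{ABF+21} (which in turn builds on the surgery arguments of \cite{HV17}), and that citation is precisely what your argument ultimately rests on. The Hatcher--Vogtmann-style surgery sketch you prepend is a reasonable description of what happens inside the cited proof, but it is not load-bearing, since you defer the hard bookkeeping to the citation anyway.
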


  We consider the following subcomplex of the tethered handle complex.

  \begin{definition}[Injective tethered handle complex] 
    The \notion{injective tethered handle complex} $\THinjOf{\TheSurface,\BdCmps}$
    is the subcomplex of $\THcxOf{\TheSurface,\BdCmps}$ consisting of those
    simplices where the involved handles are tethered to pairwise distinct
    boundary components in $\BdCmps$.
  \end{definition}

  The reason why we are interested in this subcomplex is that it is a complete
  join over the piece complex, which allows us to push forward its connectivity.
  Indeed, we already oberved that a small regular neighborhood of a tethered handle
  together with its boundary component is
  homeomorphic to a piece. In this way we obtain a simplicial map 
  \[
    \pi:
    \THinjOf{\TheSurface,\BdCmps} 
    \to
    \PieceCxOf{\TheSurface,\BdCmps}.
  \] 
  
  The following is
  \cite[Lemma 8.8]{ABF+21}, and the proof applies verbatim to this
  setting:
  \begin{proposition}\label{prop:inj-handle-cx-is-complete-join}
    The map
    \(
      \pi: \THinjOf{\TheSurface,\BdCmps}
      \to
      \PieceCxOf{\TheSurface,\BdCmps}
    \)
    is a
    complete join.
  \end{proposition}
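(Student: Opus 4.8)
The plan is to verify the two defining conditions of a \emph{complete join} from Definition~\ref{defn-join}: that $\pi$ is surjective and injective on individual simplices, and that the preimage of each simplex splits as the join of the fibers over its vertices. Throughout, for a vertex $\TheVertex$ of $\PieceCxOf{\TheSurface,\BdCmps}$ --- a separating curve cobounding a genus-one subsurface with a circle of $\BdCmps$ --- I write $R_{\TheVertex}$ for that genus-one piece. Since $R_{\TheVertex}$ is a twice-holed torus whose two boundary curves are $\TheVertex$ and a single circle of $\BdCmps$, that circle, which I denote $\TheBdCmp[\TheVertex]$, is determined by $\TheVertex$; this observation drives the whole argument.

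First I would dispatch the two easy conditions. For surjectivity, given $\TheVertex$ I choose any handle inside $R_{\TheVertex}$ together with a tether running to $\TheBdCmp[\TheVertex]$; a regular neighborhood of the union recovers $R_{\TheVertex}$, so this tethered handle maps under $\pi$ to $\TheVertex$. For injectivity on a simplex, recall that a simplex of $\THinjOf{\TheSurface,\BdCmps}$ is a set of pairwise disjoint tethered handles attached to \emph{pairwise distinct} circles of $\BdCmps$; the associated pieces then contain distinct circles of $\BdCmps$, hence have distinct piece curves, so $\pi$ is injective on the vertex set of every simplex.

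The heart of the argument is the join decomposition $\pi^{-1}(\TheSimplex)=\pi^{-1}(\TheVertex[0])\join\cdots\join\pi^{-1}(\TheVertex[\TheDim])$ for each simplex $\TheSimplex=\langle\TheVertex[0],\ldots,\TheVertex[\TheDim]\rangle$. The inclusion ``$\subseteq$'' is formal: as $\pi$ is simplicial and injective on simplices, any simplex lying over $\TheSimplex$ is spanned by tethered handles mapping bijectively onto the $\TheVertex[i]$, hence lies in the join. For ``$\supseteq$'' I would first show that the fiber $\pi^{-1}(\TheVertex)$ is carried by $R_{\TheVertex}$: if a tethered handle has piece curve $\TheVertex$, then a regular neighborhood of it together with its tethering circle has boundary $\TheVertex$ and is therefore isotopic to $R_{\TheVertex}$, so the handle and tether isotope into $R_{\TheVertex}$ and the tethering circle is forced to be $\TheBdCmp[\TheVertex]$. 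Granting this, the decisive geometric input is that for a simplex $\TheSimplex$ the pieces $R_{\TheVertex[0]},\ldots,R_{\TheVertex[\TheDim]}$ are pairwise disjoint with distinct tethering circles; once this is known, any choice of representatives $w_i\in\pi^{-1}(\TheVertex[i])$, each supported in the disjoint piece $R_{\TheVertex[i]}$, consists of pairwise disjoint tethered handles on distinct circles, i.e.\ spans a simplex of $\THinjOf{\TheSurface,\BdCmps}$ over $\TheSimplex$, which is exactly the claim.

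The main obstacle is thus the disjointness of the pieces $R_{\TheVertex[i]}$, which I would establish by the standard nesting analysis. Since the $\TheVertex[i]$ are disjoint separating curves, for $i\neq j$ the curve $\TheVertex[j]$ lies on one side of $\TheVertex[i]$, so $R_{\TheVertex[i]}$ and $R_{\TheVertex[j]}$ are either disjoint or nested. A nesting $R_{\TheVertex[i]}\subsetneq R_{\TheVertex[j]}$ would force the genus-one surface $R_{\TheVertex[j]}$ to contain the genus-one surface $R_{\TheVertex[i]}$ together with an extra handle or an extra $\BdCmps$-circle, contradicting that $R_{\TheVertex[j]}$ is a twice-holed torus with a single $\BdCmps$-circle; the only degenerate escape makes $\TheVertex[i]$ and $\TheVertex[j]$ isotopic, which is excluded. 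Hence the pieces are disjoint and carry distinct circles, completing the verification. This is precisely the content of \cite[Lemma~8.8]{ABF+21}, and the argument is insensitive to the passage from the Cantor surfaces of \cite{ABF+21} to the surfaces $\Sigma_n$, so the proof applies verbatim.
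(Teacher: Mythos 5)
Your argument is, in substance, the same one the paper relies on: the paper gives no independent proof, asserting only that the proof of \cite[Lemma~8.8]{ABF+21} applies verbatim, and your reconstruction (surjectivity, injectivity on simplices via the tethering circles, and the join decomposition obtained by showing that each fiber is carried by a canonical genus-one piece, with disjointness of pieces via a nesting argument) is exactly that argument.

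There is, however, one caveat, and it concerns precisely the claim you flag as driving the whole proof: that the genus-one piece $R_v$, and hence its circle $b_v \in Q$, is determined by the vertex $v$. Your justification is circular on this point --- knowing that $R_v$ has one $Q$-circle determines the circle \emph{given the piece}, not given the curve --- and the claim genuinely fails in one degenerate case: $Z$ of genus $2$ with $\partial Z = Q$ consisting of two circles. There, every piece curve $v$ has a genus-one piece on \emph{each} of its two sides, one containing each circle of $Q$. In that case the proposition as literally stated is false: any handle in one side tethered to that side's circle maps under $\pi$ to $v$ (the complementary region is an annulus), so choosing one such tethered handle on each side yields two disjoint tethered handles attached to distinct circles of $Q$ --- a $1$-simplex of $\mathcal{TH}_1(Z,Q)$ --- which $\pi$ collapses onto the single vertex $v$, violating injectivity on simplices. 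So the gap is one you share with the unqualified statement itself rather than a flaw in your strategy: under any hypothesis ruling out that the complement of a piece is again a piece (e.g.\ genus at least $3$, or some boundary circle of $Z$ not in $Q$), your uniqueness claim and the rest of your proof are correct. This is also why the issue is invisible in the paper: all applications of the proposition are to surfaces with genus $\geq 2k+3$, $k\geq 0$, the suited subsurfaces arising in the Morse-theoretic argument have genus $\geq 2n \geq 4$, and the cut surfaces appearing in link arguments always have boundary circles (the cutting curves) lying outside $Q$.
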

  In particular, if $\THinjOf{\TheSurface,\BdCmps}$ is $\TargetConn$-connected,
  then so is $\PieceCxOf{\TheSurface,\BdCmps}$ by
  Proposition~\ref{prop:join-push-forward}.
  Thus, we have now reduced the problem to analyzing the connectivity
  properties of the injective tethered handle complex 
  $\THinjOf{\TheSurface,\BdCmps}$. 

  \begin{theorem}\label{thm:inj_tether_cx_conn}
    The injective tethered handle complex $\THinjOf{\TheSurface,\BdCmps}$ is
    $\TargetConn$-connected, provided that
    $\GenusOf{\TheSurface}\geq2\TargetConn+3$ and
    $\CardOf{\BdCmps}\geq\TargetConn+2$.
  \end{theorem}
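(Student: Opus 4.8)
The plan is to realize $\THinjOf{\TheSurface,\BdCmps}$ as the \notion{good subcomplex} of the tethered handle complex $\THcxOf{\TheSurface,\BdCmps}$ and to run the bad simplex argument of Proposition~\ref{prop :connectivity of the goodlink}, pulling back the $\TargetConn$-connectivity supplied by Lemma~\ref{lem:TH conn}. I would argue by induction on $\TargetConn$, establishing the statement simultaneously for every surface meeting the genus and boundary hypotheses.

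First I would fix the rotten-core structure. For a simplex $\TheSimplex$ of $\THcxOf{\TheSurface,\BdCmps}$, I would let $\RottenCoreOf{\TheSimplex}$ be the face spanned by those tethered handles that share their boundary component with at least one other handle of $\TheSimplex$; equivalently, $\RottenCoreOf{\TheSimplex}$ is the union of the ``collision classes'' of size at least two. Monotonicity is clear, since a collision present in a face $\TheSimplex$ of $\AltSimplex$ persists in $\AltSimplex$; idempotence holds because each handle retained in $\RottenCoreOf{\TheSimplex}$ still shares its boundary with its (also retained) collision partners. Consequently the good simplices are exactly those whose tethers reach pairwise distinct boundary components, i.e.\ $\GoodScxOf{\THcxOf{\TheSurface,\BdCmps}} = \THinjOf{\TheSurface,\BdCmps}$, as required.

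Next I would describe the good link of a bad simplex. Since a lone tethered handle never collides, there are no bad vertices, so every bad simplex $\BadSimplex$ has dimension $p := \DimOf{\BadSimplex}\ge 1$; this observation is the linchpin of the whole estimate. Unwinding the definitions, a coface $\AltSimplex > \BadSimplex$ lies in $\GoodLinkOf{\BadSimplex}$ precisely when the handles of $\AltSimplex\setminus\BadSimplex$ are tethered injectively to boundary components not already used by $\BadSimplex$. Cutting $\TheSurface$ along the $p+1$ disjoint genus-one handles of $\BadSimplex$ produces a connected surface $\CutSurface$ with $\GenusOf{\CutSurface} = \GenusOf{\TheSurface} - (p+1)$, and it leaves untouched exactly those boundary components of $\BdCmps$ not used by $\BadSimplex$; collecting the latter into a set $\StillAdmissible$, I obtain $\GoodLinkOf{\BadSimplex} \cong \THinjOf{\CutSurface,\StillAdmissible}$, just as in the analogous computation of \cite{ABF+21}.

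It then remains to feed this into the induction. If $\BadSimplex$ uses $q$ distinct boundary components, each shared by at least two of its $p+1$ handles, then $2q \le p+1$, so $q \le p$ because $p\ge 1$. Hence $\CardOf{\StillAdmissible} = \CardOf{\BdCmps} - q \ge (\TargetConn+2) - p = (\TargetConn-p)+2$, while $\GenusOf{\CutSurface} = \GenusOf{\TheSurface}-(p+1) \ge (2\TargetConn+3)-(p+1) \ge 2(\TargetConn-p)+3$, the last inequality again using $p\ge 1$. The inductive hypothesis at the strictly smaller level $\TargetConn-p$ then shows $\GoodLinkOf{\BadSimplex}$ is $(\TargetConn-p)$-connected, which is exactly the input Proposition~\ref{prop :connectivity of the goodlink} demands. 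That proposition makes the inclusion $\THinjOf{\TheSurface,\BdCmps}\hookrightarrow\THcxOf{\TheSurface,\BdCmps}$ an isomorphism on $\pi_i$ for $i\le\TargetConn$; since the ambient complex is $\TargetConn$-connected by Lemma~\ref{lem:TH conn}, so is $\THinjOf{\TheSurface,\BdCmps}$, completing the step. The base case $\TargetConn=-1$ is mere nonemptiness, guaranteed by $\GenusOf{\TheSurface}\ge 1$ and $\CardOf{\BdCmps}\ge 1$. The step I expect to be most delicate is precisely this bookkeeping: both the genus and the boundary-count inequalities for the good link are tight at $p=1$ and would fail outright at $p=0$, so the entire argument hinges on the structural fact that bad simplices are never vertices, and on correctly tracking how cutting affects the genus and which boundary components survive.
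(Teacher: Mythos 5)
Your proposal is correct and follows essentially the same route as the paper's own proof: induction on $\TargetConn$, the identical rotten-core/bad-simplex structure on $\THcxOf{\TheSurface,\BdCmps}$, the key observation that bad simplices have dimension $\geq 1$, and the same genus and boundary-count estimates feeding Proposition~\ref{prop :connectivity of the goodlink} and Lemma~\ref{lem:TH conn}. One minor imprecision: the surface carrying the good link should be obtained by cutting off, for each boundary circle used by $\BadSimplex$, a neighborhood of that circle together with \emph{all tethers and handles} of $\BadSimplex$ meeting it (not merely cutting along the handles, since good-link handles must also avoid the tethers); this correction changes neither the genus $\GenusOf{\TheSurface}-(\DimOf{\BadSimplex}+1)$ nor the set of surviving admissible boundary components, so your estimates and the induction go through unchanged.
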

  \begin{proof}
    We induct on $\TargetConn$ starting at $\TargetConn=-1$, which is
    trivial. For $\TargetConn\geq 0$, we
    use the bad simplex argument for the
    inclusion of $\THinjOf{\TheSurface,\BdCmps}$ into 
    $\THcxOf{\TheSurface,\BdCmps}$.
    
    Consider a simplex $\TheSimplex$ in $\THcxOf{\TheSurface,\BdCmps}$ and
    define $\RottenCoreOf{\TheSimplex}$ to consist of those tethered handles
    in $\TheSimplex$ that are tethered to the same boundary component as
    another handle in $\TheSimplex$. Note that $\THinjOf{\TheSurface,\BdCmps}$
    consists precisely of those simplices $\TheSimplex$ for which 
    $\RottenCoreOf{\TheSimplex}$ is empty, i.e., the good simplices of $\THcxOf{\TheSurface,\BdCmps}$.  See Figure~\ref{F:an example}.

        \begin{center}
    \begin{figure}[htb]
      \includegraphics[scale=0.7]{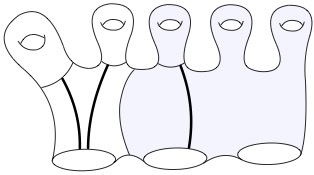}
      \caption{The figure illustrates an example of $Z$ where $g = 5$ and $Q = \partial Z$ has three components. There are three tethered handles, which define a $2$--simplex $\sigma$, while $\bar \sigma$ is a $1$--simplex spanned by the two tethered handles on the left.  The surface $Z'$ is the shaded subsurface, which has genus $3$ and three boundary components, but $Q'$ consists of just the two components on the right.}
      \end{figure} \label{F:an example}
    \end{center}
    
    Now consider a bad simplex $\BadSimplex=\RottenCoreOf{\BadSimplex}$.
    It takes at least two handles tethered to the same boundary component
    to make a bad simplex. Hence, $\DimOf{\BadSimplex}\geq 1$. Consequently,
    \(
      2 \DimOf{\BadSimplex}
      \geq 
      1+\DimOf{\BadSimplex}
    \)
    or, equivalently,
    \(
      \DimOf{\BadSimplex}
      \geq 
      \frac{1+\DimOf{\BadSimplex}}{2}.
    \)

    Note that the good link $\GoodLinkOf{\BadSimplex}$ is isomorphic to $\THinjOf{\CutSurface,\StillAdmissible}$, where $\CutSurface$ is 
    obtained from $\CmpSurface$ by cutting off, for each boundary circle
    used in $\BadSimplex$, a neighborhood of the boundary circle and all
    tethers and handles of $\BadSimplex$ that it meets. 
    The set $\StillAdmissible$ consists of
    those boundary circles in $\BdCmps$ that still exist 
    in $\CutSurface$. See Figure~\ref{F:an example}.
    Then, we note the following inequalities which allow us
    to apply the induction hypothesis.
    \begin{align*}
      \GenusOf{\CutSurface}
      &=
      \GenusOf{\TheSurface}-(\DimOf{\BadSimplex}+1)\\
      &\geq 
      \GenusOf{\TheSurface}-2\DimOf{\BadSimplex}\\
      &\geq 
      2\TargetConn+3-2\DimOf{\BadSimplex}\\
      &=
      2(\TargetConn-\DimOf{\BadSimplex})+3
    \end{align*}
    In a bad simplex each used boundary component tethers to
    at least two handles, whence we also obtain the following
    estimate:
    \begin{align*}
      \CardOf{\StillAdmissible} 
      & \geq 
      \CardOf{\BdCmps}-\frac{\DimOf{\BadSimplex}+1}{2}\\
      &\geq
      \CardOf{\BdCmps}-\DimOf{\BadSimplex}\\
      &\geq 
      \TargetConn + 2 -\DimOf{\BadSimplex}\\
      &=
      (\TargetConn-\DimOf{\BadSimplex})+2
    \end{align*}
    By the induction hypothesis, $\GoodLinkOf{\BadSimplex}$ is therefore
    $(\TargetConn-\DimOf{\BadSimplex})$-connected. 
    By Proposition \ref{prop :connectivity of the goodlink}, the inclusion
    \(
      \THinjOf{\TheSurface,\BdCmps} 
      \monorightarrow
      \THcxOf{\TheSurface,\BdCmps}
    \)
    induces isomorphisms in $\pi_{\TheDim}$
    for all $\TheDim\leq\TargetConn$. As $\THcxOf{\TheSurface,\BdCmps}$
    is $\TargetConn$-connected by Lemma~\ref{lem:TH conn}, 
    $\THinjOf{\TheSurface,\BdCmps}$ is $\TargetConn$-connected.
  \end{proof}

  \begin{proof}[Proof of Theorem~\ref{thm:conn-of-piece-cx}]
The result follows from the combination of Proposition \ref{prop:inj-handle-cx-is-complete-join} and Theorem \ref{thm:inj_tether_cx_conn}, appealing to Proposition~\ref{prop:join-push-forward}.
  \end{proof}

  \subsection{Proof of Theorem~\ref{thm:finprop}}

    The group $\OurGroup[\NumStrands]$ acts on $\TheCubeCx$ leaving
    the discrete Morse function
    \(
      \TheMorseFct
      \mapcolon
      \TheCubeCx
      \to
      \RealNumbers
    \)
    invariant. Sublevel complexes are $\OurGroup[\NumStrands]$-cocompact,
    and cell stabilizers are of type \FType[\infty] by 
    Theorem~\ref{thm:brown-preparation}.

    By the above analysis,
    descending links of vertices are of dimension $\NumStrands-1$, and
    for vertices of height $\TheMorseFct \geq 2\NumStrands$,
    they are $(\NumStrands-2)$-connected.
    By Proposition~\ref{prop:descending link to piece complex} 
    and Lemma~\ref{lem:fibers infinite}, the descending 
    links are complete joins with infinite vertex fibers over a complex of
    finite dimension, hence they are not contractible by
    Observation~\ref{obs:join-not-contractible}.

    Now the hypotheses of Brown's criterion~\ref{thm:brown} have been verified
    and the group $\OurGroup[\NumStrands]$
    is of type \FType[\NumStrands-1] but not of type \FPType[\NumStrands].

  \section{Abelianization}
  \label{sec:abelianization} 
  In this section we prove Theorem \ref{abelianizations}. The
  arguments rely on construction of non-trivial, integer-valued
  homomorphisms from pure mapping class groups of \cite{APV20}, which we
  briefly recall here for the particular case of the $n$-pronged
  surfaces $\Sigma_n$.

  Let $\gamma$ be an oriented curve that separates an end $E$ of
  $\Sigma_n$ from the rest, where the orientation is chosen so that the
  component of $\Sigma_n-\gamma$ to the right of $\gamma$ is a
  neighborhood of $E$. Then, $\gamma$ defines a nonzero element
  $[\gamma] \in H_1^{sep}(S,\mathbb Z)$, the subgroup of $H_1(S,\mathbb
  Z)$ spanned by homology classes of separating curves.

  To every $f \in \PMap(S)$ and $\gamma \in H^{sep}_1 ( S, \mathbb
  Z)$, the authors of \cite{APV20} associate an integer
  $\varphi_{[\gamma]}(f)$, which can be considered as the ``signed
  genus" between $\gamma$ and $f(\gamma)$ (see Section 3 in \cite{APV20}
  for the details). They then show that the map \[\varphi_{[\gamma]}
    \colon \PMap(S) \to \mathbb Z\] so obtained is a well-defined,
  nontrivial homomorphism which depends only on the homology class
  $[\gamma]$, see \cite[Proposition
  3.3]{APV20}. Furthermore, by \cite[Proposition~4.4]{APV20}, this
  induces a surjective homomorphism
  \[
    \Phi \colon \PMap(S) \to H^1_{sep}(S,\mathbb Z),
  \]
  by the rule $\Phi(f)([\gamma]) = \varphi_{[\gamma]}(f)$, where
  $H^1_{sep}(S, \mathbb Z) $ has been identified with
  $\text{Hom}(H_1^{sep}(S, \mathbb Z), \mathbb Z)$ by the Universal
  Coefficient Theorem. Informally speaking, $\Phi(f)$ tells us ``how
  much genus has been shifted" on each end by $f$. It follows from (the
  proof of) \cite[Theorem 5]{APV20} that $\Phi$ is a surjective
  homomorphism whose kernel is precisely $\overline{\PMap_c(S)}$, namely
  the closure of the compactly-supported mapping class group. At this
  point, in our particular setting, one has:

  \begin{theorem}[Corollary 6 of \cite{APV20}]\label{structure result}
    \(
      \PMap(\Sigma_n)
      =
      \overline{\PMap_c(\Sigma_n)}\rtimes \mathbb Z^{n-1}
    \).
  \end{theorem}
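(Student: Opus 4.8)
The plan is to exhibit the claimed decomposition as a split short exact sequence. The discussion preceding the statement already supplies the surjection $\Phi\colon \PMap(\Sigma_n)\to H^1_{sep}(\Sigma_n,\mathbb Z)$ whose kernel is exactly $\overline{\PMap_c(\Sigma_n)}$, so that
\[
1\longrightarrow \overline{\PMap_c(\Sigma_n)}\longrightarrow \PMap(\Sigma_n)\xrightarrow{\Phi} H^1_{sep}(\Sigma_n,\mathbb Z)\longrightarrow 1
\]
is exact. Thus it suffices to (i) identify the quotient with $\mathbb Z^{n-1}$ and (ii) produce a homomorphic section, whose image will be the $\mathbb Z^{n-1}$ factor.

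For (i) I would compute $H_1^{sep}(\Sigma_n,\mathbb Z)$ directly. For each end $E_i$ pick the separating curve $\gamma_i$ cutting off a neighborhood of $E_i$, oriented as in the preamble. These classes generate $H_1^{sep}$: an arbitrary separating curve partitions the ends into two sets and cobounds, with the corresponding $\gamma_i$'s, a compact subsurface, so its class is the corresponding sum of the $[\gamma_i]$ (a curve cutting off a compact piece being null-homologous). The only relation is $[\gamma_1]+\cdots+[\gamma_n]=0$, since the $\gamma_i$ jointly cobound a compact core. Hence $H_1^{sep}(\Sigma_n,\mathbb Z)\cong \mathbb Z^n/\langle e_1+\cdots+e_n\rangle\cong \mathbb Z^{n-1}$, and dualizing via the Universal Coefficient Theorem gives $H^1_{sep}(\Sigma_n,\mathbb Z)\cong \mathbb Z^{n-1}$.

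For (ii) the key point is that, although the quotient is free abelian, it is not a free group, so a section exists only if a basis can be lifted to \emph{pairwise commuting} elements. I would produce these using handle shifts, as in the Patel--Vlamis description recalled above. Concretely, for $i=1,\dots,n-1$ let $\sigma_i$ be a handle shift supported on a properly embedded strip-with-handles with one end running into $E_i$ and the other into $E_n$, chosen so that the $n-1$ supporting strips are pairwise disjoint; this is possible because the end-neighborhood of $E_n$ has infinite genus and so accommodates $n-1$ disjoint rays. Disjoint supports make the $\sigma_i$ pairwise commute, so $e_i\mapsto \sigma_i$ defines a homomorphism $\mathbb Z^{n-1}\to \PMap(\Sigma_n)$. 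A short homological computation, using that $\Phi(\sigma)([\gamma])$ records the signed genus that $\sigma$ transports across $\gamma$, then yields that $\Phi(\sigma_i)$ is the functional sending $[\gamma_i]\mapsto \pm 1$, $[\gamma_n]\mapsto \mp 1$, and $[\gamma_j]\mapsto 0$ for $j\neq i,n$; these $n-1$ functionals form a basis of $H^1_{sep}(\Sigma_n,\mathbb Z)$, so $\Phi$ restricts to an isomorphism on the subgroup generated by the $\sigma_i$. This is the desired splitting, and the internal semidirect-product structure follows.

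The main obstacle is step (ii): simultaneously guaranteeing that the lifts commute and that their images under $\Phi$ are independent. The homological bookkeeping of step (i) and the evaluation of $\Phi(\sigma_i)$ are routine once the supports are fixed; the genuinely geometric input is the disjoint, hence commuting, placement of the handle-shift supports realizing a basis of the quotient.
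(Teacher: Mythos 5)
Your proposal is correct and takes essentially the same route as the paper, which does not prove this statement itself but quotes it as Corollary~6 of \cite{APV20}: the ingredients the paper recalls in the surrounding discussion --- the surjection $\Phi\colon \PMap(\Sigma_n)\to H^1_{sep}(\Sigma_n,\mathbb Z)$ with kernel $\overline{\PMap_c(\Sigma_n)}$, the identification of the quotient with $\mathbb Z^{n-1}$, and the section given by $n-1$ pairwise commuting handle shifts --- are exactly the steps you reconstruct.
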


  When we restrict $\Phi$ to the pure subgroup $P\B_n$, the image of $\Phi$ is generated by $n-1$ handle
  shifts $\rho_1, \dots, \rho_{n-1}$, which belong to $P\B_n$.  Even
  though we do not have the semi-direct product structure in Theorem
  \ref{structure result} when restricting to $P\B_n$, the projection onto $\mathbb Z^{n-1}$ from that theorem still defines a surjective homomorphism fitting into a short exact sequence:
  \[
    1
    \rightarrow
    {(\B_n)}_c
    \rightarrow
    P\B_n
    \rightarrow
    \mathbb Z^{n-1}
    \to 1,
  \]
  where ${(\B_n)}_c$ is the intersection of $\B_n$ with
  $\overline{\PMap_c(\Sigma_n)}$, which is precisely the compactly supported elements of $\B_n$. Since ${(\B_n)}_c$ is a direct limit of mapping class
  groups of compact surfaces, Powell's result \cite{Po} implies that
  $P{(\B_n)}_c^{ab}=\{0\}$, and therefore $P\B_n^{ab} \cong \mathbb
  Z^{n-1}$. At this point, the fact that $\B_n^{ab} = \{0\}$ follows
  from the above and the natural short exact sequence
  \[
    1 \to P\B_n \to \B_n \to \mathbb \Sym(n) \to 1,
  \]
  where $\Sym(n)$ is the symmetric group on $n$ elements, and $\B_n \to \Sym(n)$ comes from the action on the $n$ ends of $\Sigma_n$. This is because the action can be used to conjugate generators of $P\B_n^{ab}$ to their inverses.  This proves Theorem~\ref{abelianizations}.

\bigskip

We observe that $(\B_n)_c$ is a normal subgroup of $\B_n$, since the conjugate of a compactly supported homeomorphism is compactly supported.  From the homomorphisms above, the quotient $G = \B_n/(\B_n)_c$ admits a homomorphism to $\Sym(n)$ with kernel $\mathbb Z^{n-1}$.  It is not hard to explicitly construct a splitting of the associated short exact sequence proving that $G \cong \mathbb Z^{n-1} \rtimes \Sym(n)$.  We thus have a short exact sequence
  \[ 1 \to (\B_n)_c \to \B_n \to (\mathbb Z^{n-1} \rtimes \Sym(n)) \to 1.\]
  On the other hand, the proof of \cite[Theorem~4.6]{PV} of Patel and Vlamis (which itself relies on a result of Paris \cite{Paris}) can be applied verbatim to show that $(\B_n)_c$ has no nontrivial finite quotients, proving the following.
  \begin{proposition} \label{prop:quotients}
      Every finite quotient of $\B_n$ factors through the homomorphism to $\mathbb Z^{n-1} \rtimes \Sym(n)$. \qed
  \end{proposition}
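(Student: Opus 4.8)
The plan is to reduce the proposition to the single fact, asserted in the paragraph immediately preceding it, that $(\B_n)_c$ has no nontrivial finite quotients, and then to read off the conclusion from the short exact sequence
\[
1 \to (\B_n)_c \to \B_n \to \mathbb{Z}^{n-1}\rtimes\Sym(n) \to 1
\]
displayed above. Granting the input about $(\B_n)_c$, the proposition is a one-line diagram chase, so essentially all the content lies in that input.

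Concretely, I would start from an arbitrary finite quotient, that is, a surjection $q \colon \B_n \twoheadrightarrow F$ onto a finite group $F$. Restricting $q$ to $(\B_n)_c$ gives a homomorphism $q|_{(\B_n)_c} \colon (\B_n)_c \to F$ whose image is a subgroup of the finite group $F$, hence a finite quotient of $(\B_n)_c$; by the no-nontrivial-finite-quotients property this image is trivial, so $(\B_n)_c \subseteq \ker q$. Since $(\B_n)_c$ is normal in $\B_n$, the universal property of quotients lets $q$ factor through $\B_n/(\B_n)_c$, which by the exact sequence is isomorphic to $\mathbb{Z}^{n-1}\rtimes\Sym(n)$. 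Thus $q$ factors through the homomorphism $\B_n \to \mathbb{Z}^{n-1}\rtimes\Sym(n)$, which is exactly the assertion.

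The hard part — and really the only substantive one — will be the input that $(\B_n)_c$ has no nontrivial finite quotients, which the paper imports by claiming that the proof of \cite[Theorem~4.6]{PV} applies verbatim. To justify that claim I would note that $(\B_n)_c$ is a directed union of the mapping class groups of the compact suited subsurfaces exhausting $\Sigma_n$, whose genera tend to infinity, and that every element is supported in some such subsurface and can therefore be pushed into a still larger one of arbitrarily large genus. On these high-genus subsurfaces the Patel--Vlamis mechanism, which rests on Paris's results \cite{Paris} concerning finite-index subgroups of mapping class groups of surfaces of large genus, forces the image of any such element in a fixed finite group to be trivial. I expect the delicate bookkeeping to be the verification that this argument transfers \emph{verbatim} rather than merely in outline: in particular one must track that a curve which is a boundary component of a given suited subsurface becomes an interior curve of a larger one, so that the nontrivial boundary-twist quotients present at each finite stage survive only at that stage and are killed in the limit. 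This is precisely the point where the directed-union structure and the unbounded growth of the genus inside $(\B_n)_c$ are used, and confirming that the curves and subsurfaces required by Paris's relations are all available inside $(\B_n)_c$ (and not merely in the ambient $\Map(\Sigma_n)$) is the step I would check most carefully.
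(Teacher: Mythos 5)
Your proposal is correct and takes essentially the same approach as the paper: the paper's proof consists precisely of the displayed short exact sequence $1 \to (\B_n)_c \to \B_n \to (\mathbb Z^{n-1} \rtimes \Sym(n)) \to 1$ together with the Patel--Vlamis/Paris argument showing $(\B_n)_c$ has no nontrivial finite quotients, from which the factorization follows by the same diagram chase you give (hence the \verb|\qed| in the statement itself). Your additional scrutiny of why the \cite{PV} argument transfers verbatim to the direct limit $(\B_n)_c$ is a reasonable elaboration of a point the paper asserts without detail.
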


An application of this and Theorem~\ref{thm:finprop} proves Corollary \ref{cor:braidedsurface}:

\begin{proof}[Proof of Corollary \ref{cor:braidedsurface}]
   First observe that if $n\neq m$, then Theorem~\ref{thm:finprop} and \cite[Theorem~5.24]{GLU20} imply that the finiteness properties of the groups are different, and so they cannot be commensurable.

   If $n=m$, and $\B_n$ and ${\rm br}H_n$  are commensurable, then after passing to the intersection with the finite index pure subgroups, we find finite index subgroups $K < P\B_n$ and $J < P{\rm br}H_n$ so that $K \cong J$.  We note here that $P{\rm br}H_n$ is the kernel of the corresponding action on the $n$ (non-isolated) ends of the underlying surface, and is not the subgroup consisting of pure braids. 
   
   Applying Proposition~\ref{prop:quotients}, we see that $K^{ab} \cong \mathbb Z^{n-1}$, and hence $J^{ab} \cong \mathbb Z^{n-1}$ as well.  That is, both abelianizations must simply be the restrictions of the abelianization of $P\B_n$, and thus also $P{\rm br}H_n$, respectively.  The kernels of the abelianizations must therefore be finite index subgroups of $(\B_n)_c$ and $({\rm br}H_n)_c$, respectively.  The former has no finite index subgroups, whereas $({\rm br}H_n)_c$ admits a homomorphism to $\mathbb Z$ (being the direct limit of compactly supported braid groups), which therefore has infinitley many nontrivial finite index subgroups.  This contradiction proves that $\B_n$ and ${\rm br}H_n$ are not commensurable. 
\end{proof}

%
%

\section{Marking graphs}\label{S:markings}

A {\em marking} $\mu$ on a surface is a pants decomposition called the {\em base} of the marking, $\base(\mu)=\bigcup_i \alpha_i$, together with a choice of transverse curves $\beta_i$ for each $\alpha_i$; that is, a curve $\beta_i$ so that $i(\alpha_i,\beta_j) = 0$ if $i \neq j$ and $i(\alpha_i,\beta_i) = 1$ or $2$, depending on whether $\alpha_i$ and $\beta_i$ fill a one-holed torus or four-holed sphere, respectively.  Masur and Minsky \cite{MM00} define a graph whose vertices are markings and so that two markings differ by {\em elementary moves}, which essentially swaps the roles of $\alpha_i$ and $\beta_i$ (together with a certain ``clean up" operation to ensure the result is again a marking).  Let $\mathcal M_n$ denote the marking graph of $\Sigma_n$.  The image of a marking under a mapping class is again a marking, and the definition of elementary move implies that the mapping class group acts on the marking graph.

For a surface $S$ of finite type, its marking graph $\mathcal M(S)$ is locally finite and the orbit map is a quasi-isometry, since the action is cocompact.  However, the action of $\mathcal B_n$ on an invariant component of $\mathcal M_n$ is not cocompact since one can find arbitrarily many distinct homeomorphism types of markings.  Moreover, the orbit map is not even a quasi-isometric embedding.  Indeed, if $\mu$ is a marking, and $t_i$ is the Dehn twist in $\alpha_i \in \base(\mu)$, then the distance from $\mu$ to $t_i(\mu)$ is uniformly bounded, while the distance from the identity to $t_i$ tends to infinity as $i \to \infty$ (since these are all distinct elements in a finitely generated group).

To prove Theorem~\ref{T:sub marking graph}, we need the following.

\begin{lemma}
For all $n \geq 2$ and any marking $\mu$, the stabilizer of $\mu$ in $\mathcal B_n$ is either finite, or contains a finite index subgroup that acts on $\Sigma_n$ by covering transformations.  In particular, the stabilizer is finite if $n \geq 3$. 
\end{lemma}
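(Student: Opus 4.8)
The plan is to promote the combinatorial action of $\operatorname{Stab}_{\mathcal B_n}(\mu)$ on the curves of $\mu$ to a genuine, properly discontinuous action on $\Sigma_n$ by isometries, and then to read off the two alternatives from the coarse geometry of the quotient. First I would build a complete hyperbolic metric $\varrho$ on $\Sigma_n$ \emph{adapted} to $\mu$: give every pair of pants of $\base(\mu)$ a fixed isometry type and glue along the curves $\alpha_i$ with the twist parameter pinned down by the transverse curves $\beta_i$ (using $i(\alpha_i,\beta_i)\in\{1,2\}$). Any $f\in\operatorname{Stab}_{\mathcal B_n}(\mu)$ preserves all of this data and is therefore isotopic to a unique $\varrho$--isometry; since $\mu$ fills $\Sigma_n$, a standard Alexander--method argument for infinite--type surfaces shows that a mapping class fixing every curve of $\mu$ is trivial, so this gives an injective homomorphism $\operatorname{Stab}_{\mathcal B_n}(\mu)\hookrightarrow\operatorname{Isom}(\Sigma_n,\varrho)$. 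As $\varrho$ has bounded geometry and is manifestly inhomogeneous, $\operatorname{Isom}(\Sigma_n,\varrho)$ is discrete and acts properly discontinuously; in particular every point stabilizer is finite.

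Next comes the dichotomy. If $\operatorname{Stab}_{\mathcal B_n}(\mu)$ is finite we are in the first case. If it is infinite, I would pass to the finite--index pure subgroup $G_0=\operatorname{Stab}_{\mathcal B_n}(\mu)\cap P\B_n$ and restrict the flux homomorphism of Section~\ref{sec:abelianization} to obtain $\Phi\colon G_0\to\mathbb Z^{n-1}$. Its kernel consists of compactly supported, marking--preserving elements, namely $G_0\cap(\B_n)_c$; such an element is supported on a compact subsurface and permutes only finitely many curves of $\mu$, so it lies in the finite stabilizer of a complete clean marking of a finite--type surface, and $\ker(\Phi|_{G_0})$ is finite. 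Hence $G_0$ is finitely generated and virtually abelian, so it contains a torsion--free finite--index subgroup $G'\cong\mathbb Z^{r}$. Being torsion free and acting properly discontinuously by isometries, every nontrivial element of $G'$ is fixed--point free (a fixed point would force membership in a finite point stabilizer); thus $G'$ acts \emph{freely}, i.e.\ by covering transformations, which is the second alternative.

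It remains to rule out the infinite case when $n\ge 3$. Assuming $\operatorname{Stab}_{\mathcal B_n}(\mu)$ is infinite, the subgroup $G'\cong\mathbb Z^{r}$ with $r\ge 1$ acts freely on $\Sigma_n$ by covering transformations. The crucial point is that this action is \emph{cocompact}: an element of $G'$ with nonzero flux is asymptotically rigid and shifts pieces toward the ends, so outside its defining surface the marking $\mu$ is forced to be periodic along each of the finitely many ends of $\Sigma_n$, leaving only finitely many $G'$--orbits of pairs of pants. Granting this, the Milnor--\v{S}varc lemma \cite{BHNPC} shows $\mathbb Z^{r}$ is quasi--isometric to $\Sigma_n$, so they have the same number of ends; but a finitely generated group has $0$, $1$, $2$, or infinitely many ends, while $\mathbb Z^r$ has two ends for $r=1$ and one end for $r\ge 2$, whereas $\Sigma_n$ has exactly $n\ge 3$ ends. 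This contradiction forces $\operatorname{Stab}_{\mathcal B_n}(\mu)$ to be finite when $n\ge 3$.

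I expect the main obstacle to be exactly this cocompactness claim — equivalently, showing that an infinite marking--stabilizer forces $\mu$ to be genuinely periodic rather than merely eventually so. Asymptotic rigidity controls $\mu$ only outside a defining surface and only up to the $\mathbb Z^{n-1}$ of fluxes, so some care is needed to assemble the finitely many ends into finitely many $G'$--orbits of pants; the finiteness of the end set is what makes this tractable, and it is precisely what permits the $n=2$ case, where $G'\cong\mathbb Z$ acts cocompactly with closed quotient, consistent with the remark that markings on $\Sigma_2$ with infinite stabilizer do occur.
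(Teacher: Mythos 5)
Your first step is exactly the paper's proof: choose a hyperbolic metric adapted to $\mu$ (all base curves of length one, transversals orthogonal), so that the stabilizer is realized inside the isometry group of $\Sigma_n$, which is discrete and acts properly discontinuously. At that point the paper simply invokes the structure of isometry groups of such surfaces: discrete, hence finite when $n\geq 3$, and finite or virtually cyclic acting by covering transformations when $n=2$. Your attempt to replace this black box by the flux homomorphism is a genuinely different (and more self-contained) route to the dichotomy, and that part works: the kernel of $\Phi$ restricted to the stabilizer is finite by the Alexander method, so the stabilizer is virtually $\mathbb{Z}^r$, and torsion-freeness together with proper discontinuity makes the $\mathbb{Z}^r$ act freely, i.e.\ by covering transformations. (Incidentally, discreteness of the isometry group is cleaner to justify via the fact that the normalizer of a non-elementary Fuchsian group is discrete, rather than ``bounded geometry and inhomogeneity''.)

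The genuine gap is the cocompactness claim, and your proposed justification is not just incomplete but false in the critical case. The flux of $g\in G'$ is a vector $(d_1,\dots,d_n)$ with $\sum_j d_j=0$, and asymptotic rigidity makes $g$ \emph{eventually equal to the identity} near every end $j$ with $d_j=0$; periodicity of $\mu$ is forced only along ends where $d_j\neq 0$. For $n=2$ a nonzero flux has both coordinates nonzero, so your heuristic is sound there; but for $n\geq 3$ there are nonzero fluxes with vanishing coordinates, e.g.\ $(1,-1,0)$ on $\Sigma_3$. If $\Phi(G')$ lies in such a coordinate hyperplane, then every element of $G'$ is eventually trivial near the third end, the marking is completely unconstrained there, there are infinitely many $G'$--orbits of pairs of pants, and the action is \emph{not} cocompact --- so Milnor--\v{S}varc yields nothing. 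This untreated case is precisely the assertion the lemma is after (finiteness for $n\geq 3$), so as written the proof does not establish the final sentence of the lemma.

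The gap can be closed within your framework by ruling out eventually-trivial infinite-order elements directly. If the mapping class $g$ fixes the isotopy class of every curve and every pair of pants of $\mu$ in a neighborhood of some end, then the isometry $\iota$ realizing $g$ preserves each far-out geodesic pair of pants and each of its boundary geodesics; the orientation-preserving isometries of a hyperbolic pair of pants fixing each boundary component form a group of order two, so $\iota^2$ fixes a nonempty open set pointwise, hence $\iota^2=\mathrm{id}$ and $g^2=1$, contradicting infinite order. Consequently every infinite-order element of the stabilizer has all flux coordinates nonzero, cocompactness does hold, and your Milnor--\v{S}varc/ends argument then finishes the $n\geq 3$ case.
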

\begin{proof}
There is a hyperbolic metric on $\Sigma_n$ so that all $\alpha_i \in \base(\mu)$ have length $1$ and all $\beta_i$ meet $\alpha_i$ orthogonally.  Then any element of the stabilizer of $\mu$ acts on $\Sigma_n$ by isometries.  The lemma now follows since the isometry group of $\Sigma_n$ is necessarily discrete, and hence finite for $n \geq 3$, and either finite or virtually cyclic acting by covering transformations for $n = 2$.
\end{proof}
It is straightforward to construct $\mathcal B_n$--invariant components of $\mathcal M_n$, for all $n \geq 2$ to which the following theorem applies, and which immediately implies Theorem~\ref{T:sub marking graph}.
\begin{theorem}
For any $\mu$ in a $\mathcal B_n$--invariant component $\mathcal M_n^0 \subset \mathcal M_n$ with finite stabilizer, there is a locally finite subgraph $\mathcal X \subset \mathcal M_n^0$ containing $\mu$ so that $\mathcal B_n$ acts properly and cocompactly on $\mathcal G$.
\end{theorem}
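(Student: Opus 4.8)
The plan is to realize $\mathcal X$ as the $\mathcal B_n$--orbit of a suitable finite connected subgraph of $\mathcal M_n^0$, so that cocompactness is automatic and local finiteness follows from properness. By Theorem~\ref{thm:finprop} the group $\mathcal B_n$ is of type $\FType[n-1]$ with $n\ge 2$, hence finitely generated; fix a finite generating set $\{s_1,\dots,s_k\}$. Since $\mathcal M_n^0$ is a connected component and is $\mathcal B_n$--invariant, both $\mu$ and each translate $s_i\mu$ lie in $\mathcal M_n^0$, so I may choose an edge--path $P_i\subset\mathcal M_n^0$ from $\mu$ to $s_i\mu$. Set $Y=\bigcup_{i=1}^k P_i$, a finite connected subgraph of $\mathcal M_n^0$ containing $\mu$, and define
\[
  \mathcal X=\mathcal B_n\cdot Y=\bigcup_{g\in\mathcal B_n} gY .
\]
Because $\mathcal M_n^0$ is $\mathcal B_n$--invariant, $\mathcal X$ is a subgraph of $\mathcal M_n^0$ containing $\mu$.

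Next I would verify the easy properties. For connectivity, $\mu$ is joined to $s_i\mu$ inside $\mathcal X$ by $P_i$, hence $g\mu$ is joined to $gs_i\mu$ by $gP_i\subset\mathcal X$ for every $g$; as $\{s_i\}$ generates $\mathcal B_n$, the whole orbit $\mathcal B_n\cdot\mu$ lies in a single component of $\mathcal X$, and since every vertex of $\mathcal X$ lies on some translate $gP_i$ it is connected to $g\mu$. Thus $\mathcal X$ is connected. For cocompactness, $\mathcal X$ is by construction the orbit of the finite complex $Y$, so $\mathcal B_n\backslash\mathcal X$ has finitely many cells and is compact.

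The remaining properties, properness and local finiteness, both reduce to showing that every vertex of $\mathcal X$ has finite $\mathcal B_n$--stabilizer. Granting this, properness is immediate (all cell stabilizers are finite), and local finiteness follows from a counting argument: a vertex $v$ meets an edge of $\mathcal X$ only through translates $ge$ of the finitely many edges $e$ of $Y$, and for each of the finitely many vertices $y\in Y$ in the orbit of $v$, the elements $g$ with $gy=v$ form a single coset of the finite group $\mathrm{Stab}(y)$, so only finitely many translates $gY$ meet $v$ and hence $v$ has finitely many neighbours. Since stabilizers within an orbit are conjugate and $Y$ has finitely many vertices, it suffices to check that every marking appearing on some $P_i$ has finite stabilizer. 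For $n\ge 3$ this is automatic: the preceding lemma shows that \emph{every} marking on $\Sigma_n$ has finite stabilizer, so any choice of paths works. For $n=2$ the hypothesis guarantees $\mathrm{Stab}(\mu)$, and hence each $\mathrm{Stab}(s_i\mu)$, is finite, and I must choose the $P_i$ so as to avoid the markings of infinite stabilizer.

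The main obstacle is precisely this last point for $n=2$. By the preceding lemma, a marking has infinite stabilizer only when $\Sigma_2$ admits a nontrivial covering transformation preserving it, i.e.\ the marking is symmetric under an infinite--order isometry exchanging or translating between the two ends; such markings form a ``thin'' subset of $\mathcal M_2^0$. The key step is therefore to show that the markings with finite stabilizer can still be joined by edge--paths in $\mathcal M_2^0$ linking $\mu$ to each $s_i\mu$; concretely, that any elementary move landing on a symmetric marking can be replaced or followed by a move breaking the symmetry, so the finitely many bad markings can always be detoured around. Once this connectivity-of-the-good-locus statement is established, the construction above goes through verbatim and produces the desired locally finite $\mathcal X$ on which $\mathcal B_n$ acts properly and cocompactly, and to which (as motivation for Theorem~\ref{T:sub marking graph}) the Milnor--\v{S}varc Lemma then applies.
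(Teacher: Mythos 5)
Your proposal is correct and is essentially the paper's own proof: the paper likewise takes a finite subgraph $\mathcal G$ consisting of paths joining $\mu$ to its images under a fixed finite generating set, insists that every vertex of $\mathcal G$ have finite stabilizer, sets $\mathcal X = \mathcal B_n \cdot \mathcal G$, gets cocompactness for free, and deduces local finiteness (hence properness) from finiteness of vertex stabilizers by the same counting argument you give (the paper phrases it as finiteness of $\{g \in \mathcal B_n \mid g\mathcal G \cap \mathcal G \neq \emptyset\}$ via a subsequence/coset argument). The one point you flag as the ``main obstacle''---routing the paths through markings with finite stabilizer when $n=2$---is dispatched in the paper with only the remark that this is automatic for $n \geq 3$ and ``easy to arrange'' for $n=2$, so your treatment matches, rather than falls short of, the published level of detail.
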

\begin{proof}
We let $\mathcal G$ be a finite subgraph which is the union of paths connecting $\mu$ to its image under each generator from some fixed finite generating set for $\mathcal B_n$.  Further, we assume that each vertex in $\mathcal G$ has finite stabilizer as well.  This is automatic for $n \geq 3$, and is easy to arrange for $n=2$.  Now set $\mathcal X = \mathcal B_n \cdot \mathcal G$.

The fact that $\mathcal B_n$ acts cocompactly on $\mathcal X$ is immediate, since the $\mathcal G$--translates cover $\mathcal X$ by construction.  The only thing we must prove is that $\mathcal X$ is locally finite.   For this, it suffices to show that 
\[ K = \{g \in \mathcal B_n \mid g \cdot \mathcal G \cap \mathcal G  \neq \emptyset \}\]
is finite.  Suppose there exists an infinite sequence of distinct elements $\{g_n\}\subset K$.  Let $x_n \in \mathcal G$ be a vertex so that $y_n = g_n \cdot x_n \in \mathcal G$.  There are only finitely many vertices of $\mathcal G$, and so after passing to a subsequence (and reindexing), $x_n=x$ and $y_n=y$ for some $x,y \in \mathcal G$.  Thus, $g_1^{-1} g_n \cdot x = x$ for all $n$, and hence the stabilizer of $x$ is infinite, a contradiction.
\end{proof}

\begin{remark}
    The utility in proving that $\Map(S)$ is quasi-isometric to the marking graph for a finite type surface $S$ is that it allows one to provide a coarse estimate for distances in terms of local information via Masur and Minsky's subsurface projections and their distance formula \cite{MM00}.  The Dehn twisting examples above imply that one cannot expect a similar distance formula for $\B_n$.  However, one may wonder if there is some restricted set of subsurfaces for which one can prove a distance formula.  Or perhaps there is still a distance formula for all of $\mathcal M_n$ (which simply does not transfer to $\B_n$ because it is not quasi-isometric to $\mathcal M_n$).
\end{remark}
	
  \bibliographystyle{amsplain}

\end{document}